\newtheorem{thm}{Theorem}[section]
\newtheorem{lem}{Lemma}[section]
\newtheorem{rem}{Remark}[section]
\newtheorem*{rem*}{Remark}
\newtheorem{prop}{Proposition}[section]
\newtheorem{defn}{Definition}[section]
\newtheorem{cor}{Corollary}[section]
\title 
{Energy-dissipation for time-fractional phase-field equations}
\date{}
\author
{Dong Li
\thanks
{Department of Mathematics, the Hong Kong University of Science \& Technology, Clear Water Bay, Kowloon, Hong Kong.
 Email: {mpdongli@gmail.com}
 }\qquad
{Chaoyu Quan}	
\thanks{SUSTech International Center for Mathematics, Southern University of Science and Technology, Shenzhen, China.
Email: {quancy@sustech.edu.cn}
}\qquad
{Jiao Xu}	
\thanks{Department of Mathematics, Southern University of Science and Technology,	Shenzhen, China.
Email: {xuj7@sustech.edu.cn} 
}}
\begin{document}
\maketitle
\begin{abstract}
We consider a class of time-fractional phase field models including the Allen-Cahn and Cahn-Hilliard
equations.  We establish several weighted positivity results 
for functionals driven by the Caputo time-fractional derivative.  Several novel criterions are examined for
showing the positive-definiteness of the associated kernel functions. We deduce strict energy-dissipation for a number of non-local energy functionals, thereby proving fractional energy
dissipation laws.
\end{abstract}

\section{Introduction}
In this work we consider the following time-fractional phase-field models:
\begin{align} \label{001}
\partial_t^{\alpha} \phi = - \frac {\delta \mathcal E }{\delta \phi},
\end{align}
where $0<\alpha<1$, $\frac {\delta \mathcal E}{ \delta \phi}$ denotes the variational derivative
of some energy functional $\mathcal E$ in a suitable Hilbert space to be specified later, and $\partial_t^{\alpha}$ is the Caputo fractional derivative
defined by
\begin{align}
(\partial_t^{\alpha} \phi)(t) 
:= \frac 1 {\Gamma(1-\alpha)} \int_0^t  \frac {\partial_s \phi} {(t-s)^{\alpha} } ds.
\end{align}
Here $\Gamma(\cdot)$ is the usual Gamma function.  The pre-factor is introduced so that
for smooth $\phi$,  $\partial_t^{\alpha}$ will coincide with the classic derivative $\partial_t$
when $\alpha \to 1$.  We shall consider a class of
 energy functionals associated with the standard Allen-Cahn and Cahn-Hilliard models. 
More precisely, the functional $\mathcal E $ takes the form
\begin{align} \label{005} 
\mathcal E  = \int_{\Omega} ( \frac 12 \nu |\nabla \phi|^2 + F(\phi) ) dx,
\end{align}
where $\nu>0$ corresponds to the constant mobility coefficient, and $F(\phi)$ is a prototypical double-well 
potential function, commonly chosen as $F(\phi) = (1-\phi^2)^2/4$.   Sometimes to emphasize
the competition between the gradient term and the potential term, one works with the
rescaled energy
\begin{align}
\mathcal {\tilde E} = \int_{\Omega} ( \frac 12 \epsilon^2 |\nabla \phi|^2 +
\frac 1 {\epsilon^2} F(\phi) ) dx,
\end{align}
where $\epsilon>0$ corresponds to the  interfacial width of a typical landscape.  This
somewhat different form can be linked to \eqref{005} via suitable rescaling of time and
space. In this work we shall not touch these subtle issues and work only with
\eqref{005}.  For simplicity we consider the periodic boundary conditions, and take 
$\Omega=[-\pi, \pi]^d$ in physical dimensions $d\le 3$. Choosing $\mathcal E$
as in \eqref{005}, we have
\begin{align}
&\text{Allen-Cahn}:\quad \frac {\delta \mathcal E}{\delta \phi}
\Bigr|_{L^2} = \mu,  \quad \mu = - \nu \Delta \phi + F^{\prime}(\phi); \\
&\text{Cahn-Hilliard}: \quad \frac {\delta \mathcal E}{\delta \phi}
\Bigr|_{H^{-1}} = -\Delta \mu =-\Delta ( - \nu \Delta \phi + F^{\prime}(\phi ) ).
\end{align}
We then recast \eqref{001} into the following general form:
\begin{align} \label{009}
\begin{cases}
\partial_t^{\alpha} \phi = \mathcal G  \mu =\mathcal G ( - \nu \Delta \phi + F^{\prime} (\phi) ),
\qquad (t,x) \in (0,T]\times \Omega;\\
\phi \Bigr|_{t=0} = \phi_0,  \quad \text{in $\Omega$},
\end{cases}
\end{align}
where $\phi_0$ is the initial data, $F^{\prime}(\phi)=\phi^3 -\phi$ and
\begin{align}
&\text{Time-fractional Allen-Cahn}: \quad \mathcal G = -1; \\
&\text{Time-fractional Cahn-Hilliard}: \quad \mathcal G = -\Delta.
\end{align}
A myriad of other models such as molecular beam epitaxy models, thin film epitaxy with or without slope selections, can also be studied in our framework but we shall not include them
here for simplicity of presentation.  The system \eqref{009} will be the main object of study.
We refer to \cite{caputo2015, quan2020def, PM2013, MK2000, GroundW2017, Za2002, DCL2005} and the references therein for related literature
on physical motivations and modelling aspects of time-fractional equations. We refer to
Dong and Kim's remarkable series of papers \cite{Dong2020Lp, Dong2021app,
Dong2021time, Dong2021weig} for the regularity theory of time-fractional equations. 
 The goal of this work is to establish energy dissipation for a class of suitably-defined nonlocal energy 
 functionals.

In order to give a useful criterion for checking semi-positive definiteness of the kernel function
needed later, we introduce the following definition.

\begin{defn}[Admissible kernels] \label{def1.1}
Let $D=\{(x_1, x_2):\, 0<x_1 \ne x_2<1\}$ be the open unit square with the diagonal removed. Suppose $K:\, D \to \mathbb [0,\infty)$ is symmetric, i.e.
$K(x,y)=K(y,x)$ for any $(x,y) \in D$.  We shall say $K$ is strongly-admissible if there exists $\psi:\, (0,1) \to \mathbb R$ such that
the following hold for $\tilde K(x,y) = {K(x,y)} { \psi(x) \psi(y) }$:
\begin{itemize}
\item  $\tilde K$ can be extended as a $C^2$-function on 
$\overline {D_-}= \{(x,y): 0\le y\le x \le 1 \}$. 

\item $\partial_x \tilde K\le 0$, $\partial_y \tilde K \ge 0$ and 
$\partial_{xy} \tilde K \le 0$ for any $(x,y) \in D_-=\{(x,y):\, 0<y<x<1 \}$. 
\end{itemize}

We shall say $K$ is admissible if there exists a sequence  of strongly-admissible $K_n$ such that
\begin{align}
K(x_0, y_0) = \lim_{n\to \infty} K_n (x_0, y_0), \qquad \forall\, (x_0, y_0) \in D. \label{011}
\end{align}
\end{defn}
\begin{rem}
A simple example of strongly-admissible function is $K(x,y)= \frac y {x+\epsilon}$ with
$\epsilon>0$ for $(x,y) \in D_-$.  Taking the limit $\epsilon \to 0$ gives us the admissible function
$K(x,y)=\frac y x$ which is in $C^2(D)$ but not in  $C^2(\overline{D})$.  Another example of
admissible function
is $K(x,y) =(x-y)^{-\alpha}$, $0<\alpha<1$ for $(x,y) \in D$.  Note that to accommodate possible
singularities on the diagonal $x=y$ we choose to define admissibility on the domain $D$ instead
of the whole unit square. Yet another example of admissible of $K$ is given by
$K(x,y)= a(x) b(y)$ where $a^{\prime}\le 0$ and $b^{\prime}\ge 0$. 
\end{rem}
\begin{rem} \label{rem1.2}
A very useful fact for checking strong admissibility is as follows. Suppose $K=K(x,y):\,
D_- \to (0,\infty)$ satisfies $\partial_x K\le 0$, $\partial_y K \ge 0$ and 
$\partial_{xy}  K \le 0$ for any $(x,y) \in D_-$.  
Suppose $h:\, (0,\infty) \to (0,\infty)$ is a $C^2$ function such that $h^{\prime}\ge 0$,
$h^{\prime\prime}\ge 0$, then $K_1(x,y) := h( K(x,y) )$ satisfies
$\partial_x  K_1\le 0$, $\partial_y K_1 \ge 0$, and $\partial_{xy} K_1 \le 0$. An example
is $K_1(x,y) = K(x,y)^{\alpha_1}$ with $\alpha_1\ge 1$.
\end{rem}
\begin{rem} \label{rem1.3}
Yet another construction  similar to the preceding remark is as follows. Suppose $K=K(x,y):\,
D_- \to \mathbb R$ satisfies $\partial_x K\le 0$, $\partial_y K \ge 0$ and 
$\partial_{xy}  K \le 0$ for any $(x,y) \in D_-$.  
Suppose $h:\, \mathbb R \to [0,\infty)$ is a $C^2$ function such that $h^{\prime}\ge 0$,
$h^{\prime\prime}\ge 0$, then $K_1(x,y) := h( K(x,y) )$ satisfies
$\partial_x  K_1\le 0$, $\partial_y K_1 \ge 0$, and $\partial_{xy} K_1 \le 0$. An example
is $K_1(x,y) = \exp( C_1\cdot K(x,y) )$, where $C_1>0$.
\end{rem}

Our first result is an explicit and rather easy to check criterion for establishing semi-positive
definiteness.  For simplicity of presentation we state it on the unit square. By scaling it can be
extended to any square $(0,T)^2$, $T>0$ or the entire first positive quadrant.

\begin{thm}[Semi-positive definiteness] \label{thm1}
Let $D=\{(x_1, x_2):\, 0<x_1 \ne x_2<1\}$.  Suppose a symmetric function $K: \, D \to \mathbb [0,\infty)$ is admissible in the
sense of Definition \ref{def1.1}. Then $K$ is semi-positive definite, i.e. for any 
sequence of points $t_1$, $\cdots$, $t_m \in D$,  any $c_1$, $\cdots$, $c_m\in \mathbb R$,
we have
\begin{align}
\sum_{i=1}^m \sum_{j=1}^m K(t_i, t_j) c_i c_j \ge 0.
\end{align}
In yet other words, the matrix $(K(t_i, t_j))$ is semi-positive definite.

Moreover if $K$ is strongly-admissible with $\tilde K=K$, then 
\begin{align}
\int_0^1 \int_0^x K(x,y) \phi(y) \phi(x) dy dx \ge 0, \qquad\forall\, \phi \in C_c^{\infty}(
(0,1) ).
\end{align}
\end{thm}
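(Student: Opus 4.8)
The plan is to reduce the discrete statement to the structural monotonicity hypotheses on $\tilde K$ via a summation-by-parts (Abel) argument, and then obtain the continuous inequality by a density/limiting procedure. First I would handle the strongly-admissible case directly: by the substitution $\phi(x) \mapsto \psi(x)\phi(x)$ it suffices to prove semi-positive definiteness for a kernel that itself extends to a $C^2$ function on $\overline{D_-}$ with $\partial_x \tilde K \le 0$, $\partial_y \tilde K \ge 0$, $\partial_{xy}\tilde K \le 0$; the weight $\psi$ only relabels the coefficients $c_i \mapsto \psi(t_i) c_i$ in the quadratic form, so no generality is lost. Then for the quadratic form $\sum_{i,j} K(t_i,t_j) c_i c_j$, order the points $t_1 < t_2 < \cdots < t_m$ and introduce partial sums $S_k = \sum_{i \le k} c_i$ (with $S_0 = 0$). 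The key identity is a two-dimensional Abel summation that rewrites the form, using the fundamental theorem of calculus
\[
K(t_i, t_j) = K(0,0) + \int_0^{t_i}\!\! \partial_x K(x,0)\,dx + \int_0^{t_j}\!\! \partial_y K(0,y)\,dy + \int_0^{t_i}\!\!\int_0^{t_j} \partial_{xy} K(x,y)\,dy\,dx,
\]
so that $\sum_{i,j} K(t_i,t_j) c_i c_j$ becomes a combination of $(\sum c_i)^2 K(0,0) \ge 0$, two single integrals against $\partial_x K \le 0$ resp. $\partial_y K \ge 0$, and a double integral against $\partial_{xy} K \le 0$, each weighted by a nonnegative square coming from the partial sums $S_k$. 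The sign hypotheses on the derivatives are exactly matched so that every term is nonnegative; this is the heart of the argument.

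For the continuous inequality $\int_0^1\!\int_0^x K(x,y)\phi(y)\phi(x)\,dy\,dx \ge 0$ with $\tilde K = K$, I would run the same Abel/telescoping computation in integral form: writing $\Phi(x) = \int_0^x \phi(s)\,ds$ (so $\Phi(0)=0$ and, since $\phi \in C_c^\infty((0,1))$, also $\Phi(1)=0$), integrate by parts in both variables against the $C^2$ kernel on the triangle $D_-$. The boundary contributions on $x=1$ and $y=0$ vanish because $\Phi(1)=\Phi(0)=0$; the diagonal boundary $x=y$ produces a term of the form $-\tfrac12\int_0^1 (\partial_x K + \partial_y K)(x,x)\,\Phi(x)^2\,dx$ or similar, and the remaining interior term is $\iint_{D_-} \partial_{xy} K(x,y)\,\Phi(x)\Phi(y)\,dy\,dx$ — but here one must be careful, since $\Phi(x)\Phi(y)$ need not have a sign. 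The cleaner route is to symmetrize first: extend $K$ symmetrically to the full square and write $2\int_0^1\!\int_0^x = \int_0^1\!\int_0^1$, then integrate by parts twice to get $\iint \partial_{xy}K(x,y)\,\Phi(x)\Phi(y)\,dy\,dx$ plus boundary terms that all vanish; this equals $\iint \partial_{xy}K\cdot\Phi(x)\Phi(y)$, which is not obviously signed. So instead I would go through the discrete result: approximate the integral by Riemann sums $\sum_{i,j} K(t_i,t_j)\phi(t_i)\phi(t_j)\,(\Delta t)^2$ over a uniform grid avoiding the diagonal, apply the just-proved semi-positive definiteness (valid since $K$ is admissible — indeed strongly-admissible — off the diagonal), and pass to the limit using that $K\phi(\cdot)\phi(\cdot)$ is integrable near the diagonal when $K$ has at worst an integrable singularity there (as in the $(x-y)^{-\alpha}$ example); for a genuinely $C^2$ kernel on $\overline{D_-}$ this limit is immediate.

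Finally, the general admissible case follows from the strongly-admissible case by the defining approximation $K(x_0,y_0) = \lim_n K_n(x_0,y_0)$: for fixed points $t_1,\dots,t_m \in D$ the quadratic forms $\sum_{i,j} K_n(t_i,t_j)c_ic_j \ge 0$ converge to $\sum_{i,j} K(t_i,t_j)c_ic_j$, preserving nonnegativity. I expect the main obstacle to be the bookkeeping in the two-dimensional Abel summation — getting every boundary/diagonal term to land with the correct sign and confirming that the three derivative hypotheses $\partial_x\tilde K\le 0$, $\partial_y\tilde K\ge 0$, $\partial_{xy}\tilde K\le 0$ (together with $K\ge 0$, which controls the corner term) are precisely what is needed and sufficient — and, secondarily, justifying the passage from the discrete to the continuous statement in the presence of a possible diagonal singularity, which requires a mild integrability observation rather than any new idea.
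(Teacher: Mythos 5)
There is a genuine gap at the step you yourself call ``the heart of the argument.'' In the two-dimensional Abel/FTC decomposition, the term coming from the mixed derivative is
\begin{align*}
\sum_{i,j} c_i c_j \int_0^{t_i}\!\!\int_0^{t_j} \partial_{xy}K(x,y)\,dy\,dx
= \int_0^1\!\!\int_0^1 \partial_{xy}K(x,y)\,A(x)A(y)\,dy\,dx,
\qquad A(x):=\sum_{i:\,t_i>x} c_i,
\end{align*}
and likewise the single-integral terms carry the weight $\bigl(\sum_i c_i\bigr)A(x)$. These weights are \emph{products of partial sums evaluated at two different points}, not squares, and they have no sign; the hypothesis $\partial_{xy}K\le 0$ alone therefore does not make the double-integral term nonnegative (e.g.\ if $\partial_{xy}K\equiv -1$ that term equals $-\bigl(\int_0^1 A\bigr)^2\le 0$). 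So the claim that ``every term is nonnegative'' fails, and this is not a bookkeeping issue: the individual pieces of your decomposition are genuinely unsigned, and the same defect reappears in the continuous version you correctly flag ($\iint \partial_{xy}K\,\Phi(x)\Phi(y)$ is not signed). The missing idea is to regroup the cross term with pure-square terms so that $-\partial_{xy}K\ge 0$ is paired with a \emph{squared difference}. This is exactly what the paper's proof does: with $v(x)=\int_0^x\phi$, successive integration by parts over the triangle $D_-$ yields
\begin{align*}
\int_{0<y<x<1} K\,v'(x)v'(y)\,dx\,dy
&=\tfrac12 K(1,0)v(1)^2+\tfrac12\int_0^1 v(x)^2\bigl(-\partial_xK(x,0)\bigr)dx\\
&\quad+\tfrac12\int_0^1\!\!\int_0^x(-\partial_{xy}K)\bigl(v(x)-v(y)\bigr)^2dy\,dx
+\tfrac12\int_0^1(\partial_yK)(1,y)\bigl(v(1)-v(y)\bigr)^2dy,
\end{align*}
where every summand is manifestly nonnegative. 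Expanding $(v(x)-v(y))^2$ shows how the unsigned cross term is absorbed by the boundary and single-variable contributions; your decomposition never performs this regrouping.

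Two secondary problems compound this. First, your FTC representation integrates $\partial_{xy}K$ over the full rectangle $[0,t_i]\times[0,t_j]$, which crosses the diagonal; but $K$ is only assumed $C^2$ on the closed triangle $\overline{D_-}$, and for the kernels of actual interest (e.g.\ $(x-y)^{-\alpha}$) it is unbounded on the diagonal, so the representation is not even available. The paper avoids this by working only on $D_-$ and anchoring the expansion at the corner $(1,0)$ rather than $(0,0)$, treating the diagonal as a boundary. Second, since you derive the continuous inequality from the discrete one via Riemann sums, the entire argument rests on the flawed discrete step; the paper goes in the opposite direction (continuous identity first, discrete statement by bump approximation), which also sidesteps the diagonal singularity. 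The parts of your proposal that do work --- the reduction to $\tilde K=K$ by absorbing $\psi$ into the test function, and the passage from strongly-admissible to admissible by pointwise limits of the quadratic forms --- coincide with the paper, but the core positivity mechanism is missing.
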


The following innocuous corollary is the key to establishing energy-dissipation later.
\begin{cor} \label{cor1}
Let $0<\alpha<1$. Define
\begin{align}
\beta_1(t)=1, \quad \beta_2(t)= t^{\alpha},
\quad \beta_3(t) = (1-t)^{-\alpha}, \quad 0<t<1.
\end{align}
For any measurable $f:\, [0,1]\to \mathbb R$ satisfying
\begin{align}
\sup_{0<s\le 1} |f(s)| s^{1-\alpha} <\infty,
\end{align}
  we have 
\begin{align} \label{1.14a}
\int_0^1 \int_0^t \frac {f(s) f(t)} {(t-s)^{\alpha}} \beta_i(t) ds dt \ge 0,
\quad i=1,2,3.
\end{align}
\end{cor}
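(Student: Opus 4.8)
The plan is to apply Theorem \ref{thm1} to the kernel
\begin{align*}
K_i(x,y) = \frac{\beta_i(\max(x,y))}{|x-y|^{\alpha}}, \qquad (x,y)\in D,
\end{align*}
for each $i=1,2,3$, so it suffices to verify that each $K_i$ is admissible in the sense of Definition \ref{def1.1}, and then handle the approximation from $f$ being merely measurable with the stated weighted bound (rather than $C_c^\infty$). On $D_-=\{0<y<x<1\}$ we have $\max(x,y)=x$, so $K_i(x,y)=\beta_i(x)(x-y)^{-\alpha}$. First I would treat $i=1$: here $K_1(x,y)=(x-y)^{-\alpha}$, which the remark after Definition \ref{def1.1} already flags as admissible; concretely one checks $\partial_x K_1 = -\alpha(x-y)^{-\alpha-1}\le 0$, $\partial_y K_1 = \alpha(x-y)^{-\alpha-1}\ge 0$, and $\partial_{xy}K_1 = -\alpha(\alpha+1)(x-y)^{-\alpha-2}\le 0$ on $D_-$. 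To get the $C^2$ extension up to the diagonal required by strong admissibility, I would use the weight $\psi$: for $K_1$ take $\psi\equiv 1$ composed with the $\epsilon$-regularization $(x-y+\epsilon)^{-\alpha}$ as in the first remark, whose monotonicity/mixed-derivative signs are identical and which does extend to a $C^2$ function on $\overline{D_-}$; letting $\epsilon\to 0$ exhibits $K_1$ as admissible.

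Next I would do $i=3$, $\beta_3(x)=(1-x)^{-\alpha}$, so $K_3(x,y)=(x-y)^{-\alpha}(1-x)^{-\alpha}$. This is a product of $(x-y)^{-\alpha}$ (handled above) with $(1-x)^{-\alpha}$, which depends only on $x$, is decreasing in $x$, and has vanishing $y$-derivative. The key point is that the class of functions on $D_-$ with $\partial_x\le 0$, $\partial_y\ge 0$, $\partial_{xy}\le 0$ is closed under products of \emph{positive} such functions: if $K=AB$ with $A,B>0$ in this class, then $\partial_x(AB)=A_xB+AB_x\le 0$, $\partial_y(AB)\ge 0$ similarly, and $\partial_{xy}(AB)=A_{xy}B + A_xB_y + A_yB_x + AB_{xy}$, where every term is $\le 0$ given the sign pattern (note $A_xB_y\le 0$ and $A_yB_x\le 0$). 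So $K_3$ inherits all three sign conditions; the $\epsilon$-regularization $(x-y+\epsilon)^{-\alpha}(1-x+\epsilon)^{-\alpha}$ provides the $C^2$ extension on $\overline{D_-}$, so $K_3$ is admissible. The case $i=2$, $\beta_2(x)=x^\alpha$, is the delicate one and I expect it to be the main obstacle: $x^\alpha$ is \emph{increasing} in $x$, so the product argument fails directly — $K_2(x,y)=x^\alpha(x-y)^{-\alpha}=\bigl(\tfrac{x}{x-y}\bigr)^\alpha = \bigl(1+\tfrac{y}{x-y}\bigr)^\alpha$. Here I would exploit the $\psi$-weight freedom in Definition \ref{def1.1}: the representation $K_2(x,y)=\bigl(1+\tfrac{y}{x-y}\bigr)^\alpha$ shows $K_2$ is a monotone (increasing, convex since $\alpha\le 1$... in fact $t\mapsto (1+t)^\alpha$ is \emph{concave}) function of the ratio; rather than fight that, set $g(x,y)=\tfrac{y}{x-y}$ which on $D_-$ satisfies $\partial_x g = -y(x-y)^{-2}\le 0$, $\partial_y g = x(x-y)^{-2}\ge 0$ — wait, these need $\partial_{xy}g\le 0$ too: $\partial_{xy}g = \partial_x\bigl(x(x-y)^{-2}\bigr)=(x-y)^{-2} - 2x(x-y)^{-3} = (x-y)^{-3}(-x-y)\le 0$. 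Good, so $g$ is in the class. Then, since $t\mapsto (1+t)^\alpha$ is increasing but \emph{concave} (not convex), Remark \ref{rem1.2} does not directly apply to $K_2 = h(g)$ with $h(t)=(1+t)^\alpha$. Instead I would choose the weight: with $\psi(x)=x^{-\alpha/2}\cdot(\text{something})$, or more promisingly write $\tilde K_2(x,y) = K_2(x,y)\psi(x)\psi(y)$ and hunt for $\psi$ making $\tilde K_2$ land in the $C^2$ sign class.

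Concretely for $i=2$ the cleanest route is: take $\psi(x) = x^{-\alpha}$, so that
\begin{align*}
\tilde K_2(x,y) = \frac{x^\alpha}{(x-y)^\alpha}\cdot x^{-\alpha} y^{-\alpha} = \frac{1}{(x-y)^\alpha y^\alpha} = \Bigl(\frac{1}{(x-y)y}\Bigr)^\alpha.
\end{align*}
Now the inner function $(x-y)^{-1}y^{-1}$ on $D_-$ is a product of $(x-y)^{-1}$ (which has $\partial_x\le 0$, $\partial_y\ge 0$, $\partial_{xy}\le 0$, all strict except signs) and $y^{-1}$ (depends only on $y$, and is \emph{decreasing} in $y$ — so $\partial_y(y^{-1})=-y^{-2}\le 0$, which has the \emph{wrong sign}). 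So that weight fails. The issue is genuinely that $K_2$ pushes mass toward the diagonal in a way not captured by a product structure. My actual plan is therefore to go back to first principles for $i=2$ and prove $\int_0^1\int_0^x K_2(x,y)\phi(y)\phi(x)\,dy\,dx\ge 0$ directly via the substitution that makes the kernel a function of $x/y$ or $y/x$ alone, reducing to positive-definiteness of a convolution kernel on the multiplicative group $(0,\infty)$ — i.e., writing $x=e^{-u}$, $y=e^{-v}$ the kernel $x^\alpha(x-y)^{-\alpha}$ becomes, up to a positive weight, a function of $u-v$ that is the Laplace/Fourier transform of a nonnegative measure (Bochner), which would give positive-definiteness; alternatively use the known representation $\frac{\Gamma(1-\alpha)}{(t-s)^\alpha}=\int_0^\infty e^{-(t-s)r} r^{\alpha-1}\,dr$ for $s<t$ to write $\int_0^1\int_0^t \frac{f(s)f(t)}{(t-s)^\alpha}t^\alpha\,ds\,dt = \frac{1}{\Gamma(1-\alpha)}\int_0^\infty r^{\alpha-1}\!\int_0^1\!\int_0^t f(s)f(t)t^\alpha e^{-(t-s)r}\,ds\,dt\,dr$ and show the inner double integral is nonnegative for each $r>0$ — here the factor $e^{-(t-s)r}=e^{-tr}e^{sr}$ does \emph{not} split the weight $t^\alpha$ symmetrically, so one more idea is needed, e.g. Plancherel after extending $s\mapsto f(s)\mathbf 1_{s\le t}$, or recognizing $t^\alpha e^{-tr}$ versus $e^{sr}$ and integrating by parts in $r$. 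Finally, once the $C_c^\infty$ inequality is established for all three $i$, I would remove the smoothness and support restrictions on $f$: given measurable $f$ with $\sup_{0<s\le1}|f(s)|s^{1-\alpha}<\infty$, the double integral in \eqref{1.14a} is absolutely convergent (since $|f(s)f(t)|(t-s)^{-\alpha}\beta_i(t)\lesssim s^{\alpha-1}t^{\alpha-1}(t-s)^{-\alpha}\beta_i(t)$ is integrable on $D_-$ for $0<\alpha<1$ and each bounded-near-0 $\beta_i$... for $\beta_3$ one also checks integrability near $t=1$), so approximate $f$ in the weighted norm by $f_n\in C_c^\infty((0,1))$ with $\int_0^1\int_0^t|f-f_n|(s)\cdot(\cdots)\to 0$ via truncation and mollification, then pass to the limit. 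The dominated-convergence bookkeeping here is routine; the genuine mathematical content is the $i=2$ positive-definiteness, which is where I would spend the effort.
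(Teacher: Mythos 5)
Your overall strategy --- realize $\beta_i(\max(x,y))\,|x-y|^{-\alpha}$ as an admissible kernel, regularize with $\epsilon$, apply Theorem \ref{thm1} to $C_c^{\infty}$ test functions, and pass to the limit --- is exactly the paper's, and your case $i=1$ matches it. But you have the difficulty of the remaining two cases backwards, and this produces two genuine gaps. For $i=3$ your argument rests on the claim that $(1-x)^{-\alpha}$ is decreasing in $x$; it is \emph{increasing} on $(0,1)$ (it blows up as $x\to 1^-$), so the product-closure lemma does not apply. Worse, the unweighted conclusion you want is false: $\partial_x\bigl[(x-y)^{-\alpha}(1-x)^{-\alpha}\bigr]=\alpha(x-y)^{-\alpha-1}(1-x)^{-\alpha-1}(2x-1-y)$ is positive whenever $2x>1+y$, so $K_3$ itself violates $\partial_xK\le 0$ on part of $D_-$. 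This is precisely the case where the $\psi$-freedom of Definition \ref{def1.1} is indispensable: the paper writes $(t-s)^{-\alpha}(1-t)^{-\alpha}=(1-t)^{-\alpha}(1-s)^{-\alpha}\bigl(\tfrac{1-s}{t-s}\bigr)^{\alpha}$, takes $\psi(t)=(1-t)^{\alpha}$, and checks (via Remark \ref{rem1.3} applied to $\log\tfrac{1-s}{t-s}$) that $\tilde K=\bigl(\tfrac{1-s}{t-s}\bigr)^{\alpha}$ has the required signs. You never invoke $\psi$ for $i=3$, so this case is broken as written.

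For $i=2$, which you flag as ``the main obstacle'' and ultimately leave unfinished (``one more idea is needed''), there is in fact no obstacle: $K_2=x^{\alpha}(x-y)^{-\alpha}$ satisfies all three sign conditions directly on $D_-$ with $\psi\equiv 1$. Indeed $\partial_xK_2=-\alpha y\,x^{\alpha-1}(x-y)^{-\alpha-1}\le 0$, $\partial_yK_2=\alpha x^{\alpha}(x-y)^{-\alpha-1}\ge 0$, and $\partial_{xy}K_2=-\alpha x^{\alpha-1}(x-y)^{-\alpha-2}(x+\alpha y)\le 0$; equivalently, $\log K_2=\alpha\log\tfrac{x}{x-y}$ is in the sign class and Remark \ref{rem1.3} applies. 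This one-line check, together with the same $\epsilon$-regularization as in case $i=1$ (use $(x+\epsilon)^{\alpha}(x-y+\epsilon)^{-\alpha}$ so the signs survive near $y=0$), is all the paper does for its Case 2. Your alternative Bochner route for $i=2$ --- substituting $x=e^{-u}$ so that $x^{\alpha}(x-y)^{-\alpha}=(1-e^{-|u-v|})^{-\alpha}$ becomes a positive power series in the positive-definite kernel $e^{-|u-v|}$ --- would also work and is essentially the paper's Section 4 argument, but you do not carry it to completion. The final density and dominated-convergence step for measurable $f$ is fine.
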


Our next result establishes the nonlocal energy dissipation for time-fractional Allen-Chan
and Cahn-Hilliard equations in physical dimensions $d\le 3$. 
\begin{thm} \label{thm2}
Let $0<\alpha<1$ and consider \eqref{009} for both the Allen-Cahn and the Cahn-Hilliard
models posed on the periodic torus $\Omega=[-\pi, \pi]^d$ in physical dimensions $d\le 3$.
Then the following hold.

\begin{enumerate}
\item \underline{Global wellposedness and regularity}: Allen-Cahn case. 
Let $\phi_0 \in H^{n_0} (\Omega)$, $n_0\ge 1$. 
Corresponding to the initial data $\phi_0$ there exists a unique time-global solution $\phi \in C_t^0 H^{n_0}$  in the sense of Definition \ref{def3.1}.  Moreover, for any $\tilde T>0$ and integer
$k\ge 0$ we have (below we employ the same notation as in
Definition \ref{def3.1}, in particular $\tilde \phi( \tilde t) =\tilde \phi(t^{\alpha}) = \phi(t)$)
\begin{align} 
\sup_{0<\tilde t \le \tilde T}
\| {\tilde t}^{\frac k 2} \tilde \phi (\tilde t, \cdot) \|_{H^{n_0+k} (\Omega)}
\le C^{(1)}_{\phi_0, \nu, \alpha, \tilde T, k, n_0} <\infty,
\end{align}
where $C^{(1)}_{\phi_0, \nu, \alpha, \tilde T, k, n_0}>0$ depends on ($\phi_0$, $\nu$, $\alpha$, $\tilde T$, $k$, $n_0$). If $\phi_0 \in H^{m_0}(\Omega)$, $m_0\ge 2$, then 
we also have $\partial_{\tilde t} \tilde \phi \in C_{\tilde t}^0 H^{m_0-2}$, and  for any
$\tilde T>0$, 
\begin{align} 
\sup_{0<\tilde t \le \tilde T}
\| \partial_{\tilde t} \tilde \phi (\tilde t, \cdot) \|_{H^{m_0-2} (\Omega)}
\le C^{(2)}_{\phi_0, \nu, \alpha, \tilde  T,  m_0} <\infty,
\end{align}
where $ C^{(2)}_{\phi_0, \nu, \alpha, \tilde T,  m_0}>0$ depends on ($\phi_0$, $\nu$, $\alpha$, $\tilde T$, $m_0$).

\item \underline{Global wellposedness and regularity}: Cahn-Hilliard case.
Let $\phi_0 \in H^{n_0} (\Omega)$, $n_0\ge 1$ and assume $\int_{\Omega} \phi_0 dx =0$. 
Corresponding to the initial data $\phi_0$ there exists a unique time-global solution $\phi \in C_t^0 H^{n_0}$  in the sense of Definition \ref{def3.2}.  Moreover, for any $\tilde T>0$ and integer
$k\ge 0$ we have 
\begin{align} 
\sup_{0<\tilde t \le \tilde T}
\| {\tilde t}^{\frac k 4} \tilde \phi (\tilde t, \cdot) \|_{H^{n_0+k} (\Omega)}
\le C^{(3)}_{\phi_0, \nu, \alpha, \tilde T, k, n_0} <\infty,
\end{align}
where $C^{(3)}_{\phi_0, \nu, \alpha, \tilde T, k, n_0}>0$ depends on ($\phi_0$, $\nu$, $\alpha$, $\tilde T$, $k$, $n_0$). If $\phi_0 \in H^{m_0}(\Omega)$, $m_0\ge 4$, then 
we also have $\partial_{\tilde t} \tilde \phi \in C_{\tilde t}^0 H^{m_0-4}$, and  for any
$\tilde T>0$, 
\begin{align} 
\sup_{0<\tilde t \le \tilde T}
\| \partial_{\tilde t} \tilde \phi (\tilde t, \cdot) \|_{H^{m_0-4} (\Omega)}
\le  C^{(4)}_{\phi_0, \nu, \alpha, \tilde  T,  m_0} <\infty,
\end{align}
where $C^{(4)}_{\phi_0, \nu, \alpha, \tilde T,  m_0}>0$ depends on ($\phi_0$, $\nu$, $\alpha$, $\tilde T$, $m_0$).

\item \underline{Boundedness of the usual energy}. Let $\phi_0\in H^1( \Omega)$.
For both the Allen-Cahn and the
Cahn-Hilliard (for Cahn-Hilliard we assume $\int_{\Omega} \phi_0 dx =0$) case, it holds that
\begin{align} \label{1.15A}
\mathcal E (\phi(t) ) \le \mathcal E(\phi_0), \qquad \forall\, t>0,
\end{align} 
where $\mathcal E$ is defined in \eqref{005}.

\item \underline{Nonlocal dissipation of the usual  energy}. Assume $\phi_0
\in H^{m_0}(\Omega)$, $m_0\ge 2$ for Allen-Cahn, and $\phi_0 \in H^{m_0}(\Omega)$, $m_0\ge 4$ with $\int_{\Omega} \phi_0 dx =0$ for Cahn-Hilliard.
For both Allen-Cahn and Cahn-Hilliard, it holds that
\begin{align} \label{1.21A}
\partial_t^{\alpha} \Bigl( \mathcal E( \phi(t) )  \Bigr) <0, \qquad \forall\, t>0.
\end{align}

\item \underline{Monotonic dissipation of the nonlocal energy}. Assume
$\phi_0 \in H^{n_0} (\Omega)$, $n_0\ge 1$. For Cahn-Hilliard we assume also
$\int_{\Omega} \phi_0 dx =0$. 
Suppose $w: \, (0,1) \to [0, \infty)$
is in $L^1((0,1))$. Define
\begin{align} \label{1.22A}
K(x,y) =\frac {w(x) x} {(x-y)^{\alpha}},  \quad (x,y) \in D_-;
\end{align}
and $K(y,x) = K(x,y)$ for $(x,y) \in D_-$.  Assume $K$ is admissible in the sense of 
Definition \ref{def1.1}. Define 
\begin{align}
E_{\omega}(t) = \int_0^1 \omega(\theta) \mathcal E ( \phi(\theta t) ) d\theta.
\end{align}
Then
\begin{align} \label{1.24A}
 E_{\omega}(t_2) \le E_{\omega}(t_1),  \qquad\forall\, 0\le t_1 \le t_2 <\infty.
\end{align}
\end{enumerate}
\end{thm}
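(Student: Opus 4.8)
\emph{Proof plan for part (5).}
I will deduce \eqref{1.24A} from the pointwise monotonicity statement $\frac{d}{dt}E_\omega(t)\le 0$ for $t>0$; the remaining case $t_1=0$ is immediate from part (3), since $\mathcal E(\phi(\theta t))\le\mathcal E(\phi_0)$ for all $\theta\in(0,1)$ by \eqref{1.15A} gives $E_\omega(t)\le\mathcal E(\phi_0)\int_0^1\omega=E_\omega(0)$ for every $t\ge 0$. To prepare the derivative I first record the instantaneous energy identity underlying \eqref{1.21A}: writing $g(t)=\mathcal E(\phi(t))$, the chain rule $\frac{d}{dt}\mathcal E(\phi)=\langle\mu,\partial_t\phi\rangle_{L^2}$ together with \eqref{009} yields
\begin{align}
g'(\tau)=-\langle (\partial_t^\alpha\phi)(\tau),(\partial_t\phi)(\tau)\rangle_{\mathcal H}
=-\frac{1}{\Gamma(1-\alpha)}\int_0^\tau\frac{h(s,\tau)}{(\tau-s)^\alpha}\,ds ,\qquad \tau>0 ,
\end{align}
where $\mathcal H=L^2(\Omega)$ for Allen-Cahn, $\mathcal H$ is the mean-zero $H^{-1}(\Omega)$ space for Cahn-Hilliard (legitimate because $\int_\Omega\phi\,dx\equiv0$ there), and $h(s,\tau):=\langle(\partial_t\phi)(s),(\partial_t\phi)(\tau)\rangle_{\mathcal H}$ is symmetric in its arguments. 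Although part (5) only assumes $\phi_0\in H^{n_0}$ with $n_0\ge1$, the parabolic smoothing of parts (1)-(2) makes $\phi(t)$ spatially smooth for $t>0$ with the integrable bound $\|(\partial_t\phi)(t)\|_{\mathcal H}\lesssim t^{\alpha-1}$ near $t=0$; hence $g\in C^1((0,\infty))$ and all integrals below converge absolutely (indeed $\int_0^x|K(x,y)h(ty,tx)|\,dy\lesssim\omega(x)x^\alpha\in L^1_x$, by a Beta-function estimate).

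The crux is a change of variables converting $-\frac{d}{dt}E_\omega$ into a double integral governed exactly by the kernel \eqref{1.22A}. Differentiating under the integral sign (licensed on compact subsets of $(0,\infty)$ by the bounds above), substituting $\tau=\theta t$ in the $\theta$-integral and inserting the formula for $g'$, and finally rescaling $\tau=tx$, $s=ty$ with $0<y<x<1$ — whose Jacobian $t^2$ cancels the prefactor $t^{-2}$ and turns the numerator $\tau$ into $tx$ — I obtain
\begin{align}
-\frac{d}{dt}E_\omega(t)
&=\frac{1}{t^{2}\Gamma(1-\alpha)}\int_0^t\int_0^\tau\frac{\omega(\tau/t)\,\tau}{(\tau-s)^\alpha}\,h(s,\tau)\,ds\,d\tau \\
&=\frac{t^{1-\alpha}}{\Gamma(1-\alpha)}\int_0^1\int_0^x\frac{\omega(x)\,x}{(x-y)^\alpha}\,h(ty,tx)\,dy\,dx \\
&=\frac{t^{1-\alpha}}{\Gamma(1-\alpha)}\int_0^1\int_0^x K(x,y)\,h(ty,tx)\,dy\,dx ,
\end{align}
with $K$ precisely the admissible kernel of \eqref{1.22A}. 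It remains to prove that this last double integral is nonnegative.

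To that end I fix a real orthonormal basis $\{e_m\}$ of $\mathcal H$ and expand $h(ty,tx)=\sum_m f_m(y)f_m(x)$, where $f_m(x):=\langle(\partial_t\phi)(tx),e_m\rangle_{\mathcal H}$, reducing the claim to $\int_0^1\int_0^x K(x,y)f_m(y)f_m(x)\,dy\,dx\ge0$ for each $m$. Since $K$ is admissible, Theorem \ref{thm1} shows that every matrix $(K(t_i,t_j))$ is positive semidefinite; passing from this to the stated integral inequality is a routine approximation — truncate $\omega$ to $\min(\omega,M)$ to get a bounded weight, discretize $(0,1)$ into small subintervals so that the matrix statement applies to the Riemann sums (the diagonal contribution being negligible because $(x-y)^{-\alpha}$ is integrable), then let $M\to\infty$ by monotone convergence; for strongly admissible $K$ with $\tilde K=K$ it is exactly the second assertion of Theorem \ref{thm1}, and for a general admissible $K$ one passes to the limit in \eqref{011} along the approximating strongly admissible kernels. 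Summing over $m$ gives $-\frac{d}{dt}E_\omega(t)\ge0$ for $t>0$; integrating over $[t_1,t_2]$ and appending the $t_1=0$ case yields \eqref{1.24A}.

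The hard part is the middle step: organizing the two substitutions so that $\omega$, the factor $x$, and the power $(x-y)^{-\alpha}$ fit together precisely into the admissible kernel $K$ of \eqref{1.22A}, and simultaneously justifying each manipulation (differentiation under the integral, Fubini, the change of variables) rigorously in spite of the $t^{\alpha-1}$-type singularity of $\partial_t\phi$ at $t=0$ and the weak assumed regularity $n_0\ge1$ — this is exactly where one must lean on the smoothing estimates of parts (1)-(2). The upgrade of Theorem \ref{thm1} from its pointwise/matrix form to the integral, $\mathcal H$-valued form is standard but deserves to be spelled out, precisely because the kernel $K$ in \eqref{1.22A} is only assumed admissible (rather than strongly admissible) and has a singularity on the diagonal.
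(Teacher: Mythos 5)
Your proposal addresses only part (5) of the five-part theorem. Parts (1)--(4) --- the global wellposedness and smoothing estimates (which the paper proves via the multi-step contraction/bootstrap arguments of Theorems \ref{thm3.1} and \ref{thm3.2}), the energy bound \eqref{1.15A}, and the nonlocal dissipation \eqref{1.21A} (which rests on the doubly-weighted kernel positivity of Corollary \ref{cor1}, case $\beta_3$) --- are used as inputs but never established, so as a proof of the stated theorem the proposal is substantially incomplete. That said, within part (5) your argument is correct and is precisely the route the paper intends but declines to write out (``The proof is straightforward. We omit further details.''): differentiate $E_\omega$ under the integral, insert $g'(\tau)=-\langle\partial_t^\alpha\phi,\partial_t\phi\rangle_{\mathcal H}$ with $\mathcal H=L^2$ (resp.\ mean-zero $H^{-1}$), and rescale $\tau=tx$, $s=ty$ so that the weight $\omega$, the Jacobian factors and the Caputo kernel assemble into exactly the admissible kernel \eqref{1.22A}; the sign then comes from Theorem \ref{thm1}. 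Your orthonormal-basis expansion of $h$ is a clean substitute for the paper's device of applying the scalar positivity pointwise in $x\in\Omega$ and then integrating over $\Omega$; the two are equivalent.

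Two points deserve tightening. First, for $n_0=1$ data you assert $\|(\partial_t\phi)(t)\|_{\mathcal H}\lesssim t^{\alpha-1}$ ``by the smoothing of parts (1)--(2)'', but the stated temporal estimate $\|\partial_{\tilde t}\tilde\phi\|_{H^{m_0-2}}\lesssim 1$ requires $m_0\ge 2$ (resp.\ $m_0\ge4$), and the fractional evolution has no semigroup property that would let you restart from a smoothed $\phi(t_0)$; the paper's actual device is to mollify $\phi_0$, prove the monotonicity for smooth data, and pass to the limit by $H^1$ stability --- you should say this explicitly rather than lean on estimates that are not available at $n_0=1$. Second, the passage from the matrix form of Theorem \ref{thm1} to the integral inequality $\int_0^1\int_0^x K f f\,dy\,dx\ge0$ for a general admissible (not strongly admissible with $\tilde K=K$) kernel is genuinely the delicate step: the diagonal entries $K(t_i,t_i)$ are not defined, your $f$ behaves like $y^{\alpha-1}$ near $y=0$, and a Riemann-sum argument needs $\omega$ better than $L^1$. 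A cleaner route is to work with the strongly admissible approximants $K_n$ of \eqref{011}, apply the integration-by-parts identity from the proof of Theorem \ref{thm1} to mollified, compactly supported truncations of $f$, and pass to the limit by dominated convergence using your Beta-function bound; as written, ``routine approximation'' papers over exactly the singularities that make the step nontrivial.
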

\begin{rem}
To check admissibility of $K$ in \eqref{1.22A}, some sufficient conditions on $\omega$ can be given. 
For example if $\omega(\theta) \theta^{1-\alpha} (1-\theta)^{\alpha}$ is non-increasing 
in $\theta$, then $K$ in \eqref{1.22A} is admissible. To see this, it suffices to write
\begin{align}
K(x,y) =  (1-x)^{-\alpha} (1-y)^{-\alpha}  \cdot  \omega(x) x^{1-\alpha} (1-x)^{\alpha}
 \cdot \frac {x^{\alpha} (1-y)^{\alpha} } { (x-y)^{\alpha} }, \quad 0<y<x<1.
\end{align}
It suffices for us to check that $K_1(x,y) = \frac {x^{\alpha} (1-y)^{\alpha} } { (x-y)^{\alpha}  }$
satisfies $\partial_x K_1 \le 0$, $\partial_y K_1\ge 0$ and $\partial_{xy} K_1 \le 0$ in $D_-$.
In view of Remark \ref{rem1.3},  we only need to consider
\begin{align}
K_2(x,y) = \frac 1 {\alpha} \log K_1 (x,y) =
\log x +\log (1-y) - \log (x-y).
\end{align}
It is easy to check that
$\partial_x K_2 \le 0$, $\partial_y K_2\ge 0$ and $\partial_{xy} K_2\le 0$ in $D_-$.
Thus the desired conclusion follows.
\end{rem}

The rest of this paper is organized as follows. In Section 2 we give the proof of Theorem
\ref{thm1} and Corollary \ref{cor1}. In Section 3 we give the proof of Theorem \ref{thm2}. 
In Section 4 we collect several novel proofs of semi postive-definiteness of various kernel functions.

\section{Proof of Theorem \ref{thm1}  and Corollary \ref{cor1} }
\begin{proof}[Proof of Theorem \ref{thm1}]
In view of the limit \eqref{011}, it suffices for us to prove Theorem \ref{thm1} under the assumption
that $K$ is strongly-admissible. Moreover we may assume $\tilde K= K$.

It suffices to show $\int K(x,y)\phi(x)\phi(y) dx dy \ge 0$ for any $\phi \in C_c^{\infty}(0,1)$. To this 
end, define $v(x) =\int_0^x \phi(\tilde x )d\tilde x$ and note that $v^{\prime} = \phi$. Then
after successive integration by parts, we obtain 
\begin{align*}
&\int_{0<y<x<1} K(x,y) v^{\prime}(x) v^{\prime}(y) dx dy =
\frac 12 K(1,0) v(1)^2 +
\frac 12 \int_0^1 v(x)^2 ( -\partial_x K(x, 0) ) dx  \notag \\
&+ \frac 12 \int_0^1 \int_0^x (-\partial_{xy} K) \cdot (v(x)-v(y))^2 dy dx  
+\frac 12 \int_0^1 (\partial_y K)(1,y) (v(1)-v(y))^2 dy. 
\end{align*}
This is clearly nonnegative.
\end{proof}

\begin{proof}[Proof of Corollary \ref{cor1}]
We discuss several cases.

Case 1: $\beta_1(t) =1$.  Let $\epsilon>0$ and define
\begin{align}
K_{\epsilon}(t,s)=  \frac 1 {(|t-s|+\epsilon)^{\alpha} }.
\end{align}
Note that on $\overline{D_-}=\{(t,s):\; 0\le s \le t \le 1 \}$, we have
$K_{\epsilon}(t,s) = \frac 1 {(t-s+\epsilon)^{\alpha} }$ which is clearly $C^2$. It
is not difficult to check that  $K_{\epsilon}$ is strongly-admissible in the sense
of Definition \ref{def1.1} (with $\psi \equiv 1$). By Theorem \ref{thm1}, we have
\begin{align}
\int_0^1 \int_0^t K_{\epsilon}(t,s) \phi(s) \phi(t) ds dt \ge 0, \qquad \forall\, \phi
\in C_c^{\infty}((0,1)).
\end{align}
The inequality \eqref{1.14a} for $i=1$ then follows from Lebesgue Dominated Convergence.

Case 2: $\beta_2(t) =t^{\alpha}$. In this case we work with
\begin{align}
K_{\epsilon}(t,s) = \frac 1 {(t-s+\epsilon)^{\alpha} } t^{\alpha}, \quad s<t.
\end{align}
The argument is similar to Case 1 and we omit the details.

Case 3: $\beta_3(t) = (1-t)^{-\alpha}$. In this case observe that
\begin{align}
(t-s)^{-\alpha} (1-t)^{-\alpha}
=(1-t)^{-\alpha} (1-s)^{-\alpha}
\Bigl( \frac {1-s} {t-s} \Bigr)^{\alpha}, \quad 0<s<t<1.
\end{align}
Clearly the function $K(t,s) = \log ({1-s }) + \log (t-s)$ satisfies $\partial_t K\le 0$,
$\partial_s K\ge 0$, $\partial_{ts} K\ge 0$ on $D_-=\{(t,s): 0<s<t<1 \}$. By Remark
\ref{rem1.3} the function $K_{\alpha}(t,s) = \exp( \alpha K(t,s) )$ also satisfies these conditions.
It is then straightforward to check that we have admissibility and \eqref{1.14a}
for $i=3$ follows.
\end{proof}
\section{Proof of Theorem \ref{thm2}}
In this section we carry out the proof of Theorem \ref{thm2}. We first make the notion of 
solution more precise. Let $0<\alpha<1$ and consider
\begin{align}
\begin{cases}
\partial_t^{\alpha} w = \beta w + \sigma(t), \\
w \Bigr|_{t=0} =w_0,
\end{cases}
\end{align}
where $\beta$ is a constant, and $\sigma: \;[0,T]\to \mathbb R$ is assumed to a given bounded
measurable function. 
By analogy with the usual mild solution, we have
\begin{align}
w(t) = E_{\alpha,1}(\beta t^{\alpha} ) w_0 + \int_0^t s^{\alpha-1} E_{\alpha,\alpha}
(\beta s^{\alpha} ) \sigma (t-s) ds,
\end{align}
where  for $\alpha>0$, $\beta>0$ the Mittag-Leffler function $E_{\alpha,\beta}$ is 
given by
\begin{align}
E_{\alpha,\beta}(z) = \sum_{m=0}^{\infty} \frac {z^m} {\Gamma(\alpha m +\beta)}.
\end{align}
Now make a change of variable:
\begin{align}
&\tilde t = t^{\alpha}, \quad \tilde s = s^{\alpha} ;\\
& \tilde w (\tilde t) = \tilde w (t^{\alpha} ) =w(t), \quad \tilde \sigma (\tilde s)
= \tilde \sigma (s^{\alpha}) = \sigma (s).
\end{align}
We then obtain
\begin{align}
\tilde w (\tilde t)
& = E_{\alpha,1}(\beta \tilde t) w_0
+ \alpha^{-1} \int_0^{\tilde t}
E_{\alpha,\alpha}(\beta \tilde s) \tilde \sigma(  ( {\tilde t}^{\frac 1 {\alpha}}
-{\tilde s }^{\frac 1{\alpha} } )^{\alpha} ) d\tilde s \notag \\
& = E_{\alpha,1} (\beta \tilde t) w_0
+ \alpha^{-1} \tilde t \int_0^1 E_{\alpha,\alpha}(\beta \tilde t r)
\tilde \sigma ( \tilde t (1-r^{\frac 1{\alpha} } )^{\alpha} ) dr.
\end{align}
This new formalism facilitates the book-keeping of temporal regularity along
classical lines.

We now consider the time-fractional Allen-Cahn equation posed on the
periodic torus $\Omega= [-\pi, \pi]^d$ in physical dimensions $d\le 3$:
\begin{align}  \label{AC3.3}
\begin{cases}
\partial_t^{\alpha} \phi =  \nu \Delta \phi +\phi -\phi^3,
\qquad (t,x) \in (0,T]\times \Omega;\\
\phi \Bigr|_{t=0} = \phi_0,  \quad \text{in $\Omega$},
\end{cases}
\end{align}
where $0<\alpha<1$, $\nu>0$, and $\phi_0 \in H^{n_0} (\Omega)$, $n_0\ge 1$.

\begin{defn} \label{def3.1}
Let $T>0$. We say $\phi \in C([0,T], H^{n_0} (\Omega))$ is a solution to 
\eqref{AC3.3} if 
\begin{align} \label{e3.31}
\phi(t) = E_{\alpha,1} (t^{\alpha} (\nu \Delta +1) ) 
\phi_0 + \int_0^t s^{\alpha-1} E_{\alpha,\alpha} ( s^{\alpha}
(\nu \Delta +1) ) 
\Bigl( \phi(t-s) \Bigr)^3 ds, \qquad\forall\, 0<t \le T.
\end{align}
Equivalently for
$\tilde {\phi}(\tilde t, x) = \tilde {\phi}(t^{\alpha},x) = \phi(t,x)$, the requirement is
\begin{align} 
\tilde{\phi} (\tilde t)
= E_{\alpha,1} ( \tilde t (\nu \Delta +1) ) \phi_0
+ \alpha^{-1} \tilde t \int_0^1 E_{\alpha,\alpha}( \tilde t r (\nu \Delta +1) )
 \Bigl(\tilde {\phi}  ( \tilde t (1-r^{\frac 1{\alpha} } )^{\alpha} )  \Bigr)^3dr,
 \qquad \forall\, 0<\tilde t \le T^{\alpha}.
\end{align}
\end{defn}
\begin{rem}
The identity \eqref{e3.31} holds in the Banach space $L^2$-valued sense, i.e.
$ \phi$ can be viewed as a continuous map $[0, T^{\alpha}] \to L^2$. 
By bootstrapping one also show that the identity also holds in the $H^{n_0}$-valued
sense.
\end{rem}

\begin{thm}[Wellposedness and regularity for time-fractional AC]  \label{thm3.1}
Consider \eqref{AC3.3} with $\phi_0 \in H^{n_0} (\Omega)$, $n_0\ge 1$. 
There exists a unique time-global solution $\phi \in C_t^0 H^{n_0}$ 
corresponding to the initial data $\phi_0$ and
\begin{align}
\mathcal E (\phi(t) ) \le \mathcal E( \phi_0), \qquad\forall\, t>0.
\end{align}
  Moreover, for any $\tilde T>0$ and integer
$k\ge 0$ we have
\begin{align} \label{3.32}
\sup_{0<\tilde t \le \tilde T}
\| {\tilde t}^{\frac k 2} \tilde \phi (\tilde t, \cdot) \|_{H^{n_0+k} (\Omega)}
\le C_{\phi_0, \nu, \alpha, \tilde T, k, n_0} <\infty,
\end{align}
where $C_{\phi_0, \nu, \alpha, \tilde T, k, n_0}>0$ depends on ($\phi_0$, $\nu$, $\alpha$, $\tilde T$, $k$, $n_0$). If $\phi_0 \in H^{m_0}(\Omega)$, $m_0\ge 2$, then 
we also have $\partial_{\tilde t} \tilde \phi \in C_{\tilde t}^0 H^{m_0-2}$, and  for any
$\tilde T>0$, 
\begin{align} \label{3.33}
\sup_{0<\tilde t \le \tilde T}
\| \partial_{\tilde t} \tilde \phi (\tilde t, \cdot) \|_{H^{m_0-2} (\Omega)}
\le \tilde C_{\phi_0, \nu, \alpha, \tilde  T,  m_0} <\infty,
\end{align}
where $\tilde C_{\phi_0, \nu, \alpha, \tilde T,  m_0}>0$ depends on ($\phi_0$, $\nu$, $\alpha$, $\tilde T$, $m_0$).
\end{thm}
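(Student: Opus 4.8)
The plan is to construct the solution from the rescaled Duhamel representation of Definition \ref{def3.1}, to run a short‑time fixed‑point argument, to globalize using the energy bound — which is precisely where Corollary \ref{cor1} enters — and finally to bootstrap the weighted Sobolev and time‑derivative estimates. The only genuine analytic inputs are the mapping properties of the Mittag–Leffler operators $E_{\alpha,1}(\tilde t A)$ and $E_{\alpha,\alpha}(\tilde t A)$ on $H^s(\Omega)$, where $A := \nu\Delta + 1$. On the torus $A$ is diagonal in the Fourier basis with eigenvalues $\lambda_m = 1 - \nu|m|^2$; only finitely many are positive, and for the remaining ones the asymptotics $E_{\alpha,\beta}(-x) = x^{-1}/\Gamma(\beta-\alpha) + O(x^{-2})$ as $x \to +\infty$ yield, uniformly for $0 < \tilde t \le \tilde T$,
\[
\|E_{\alpha,1}(\tilde t A)\|_{H^s \to H^s} \le C(\tilde T), \qquad
\|E_{\alpha,1}(\tilde t A)\|_{H^s \to H^{s+2}} \le C(\tilde T)\,\tilde t^{-1}, \qquad
\|E_{\alpha,\alpha}(\tilde t A)\|_{H^s \to H^{s+2\theta}} \le C(\tilde T)\,\tilde t^{-\theta},\quad \theta\in[0,1),
\]
the positive eigenvalues contributing only to $C(\tilde T)$, and the last bound (with $\theta<1$) keeping the Duhamel kernel $s^{\alpha-1}E_{\alpha,\alpha}(s^\alpha A)$ integrable at $s=0$. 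With these in hand the nonlinear map of Definition \ref{def3.1} is a contraction on a small ball of $C([0,\tilde T_0],H^{n_0})$ with $\tilde T_0$ depending only on $\|\phi_0\|_{H^{n_0}}$: the cubic term is controlled by the algebra property of $H^{n_0}$ when $n_0\ge 2$, and for $n_0=1$, $d\le 3$, by $H^1\hookrightarrow L^6$, so $\phi^3\in L^2$, the lost derivatives being recovered from the smoothing estimate with $\theta<1$. This produces a unique short‑time solution.

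To globalize I would first prove the energy bound along the solution on its maximal interval. Using $\mu = -\partial_t^\alpha\phi$ for Allen–Cahn and the variational identity $\tfrac{d}{ds}\mathcal E(\phi(s)) = \langle \mu(s),\partial_s\phi(s)\rangle_{L^2}$,
\[
\mathcal E(\phi(t)) - \mathcal E(\phi_0) = -\int_0^t \big\langle \partial_s^\alpha\phi(s),\,\partial_s\phi(s)\big\rangle_{L^2}\,ds .
\]
Expanding $\partial_s^\alpha\phi(s) = \Gamma(1-\alpha)^{-1}\int_0^s (s-r)^{-\alpha}\,\partial_r\phi(r)\,dr$ and using Fubini, the right‑hand side equals $-\Gamma(1-\alpha)^{-1}\int_\Omega\big(\int_0^t\int_0^s (s-r)^{-\alpha}\,\partial_r\phi(r,x)\,\partial_s\phi(s,x)\,dr\,ds\big)dx$, and for each fixed $x$ the inner double integral is $\ge 0$ by Corollary \ref{cor1} with $i=1$ (applied to $f(r) = \partial_r\phi(r,x)$, whose weighted bound $\sup_r |f(r)|\,r^{1-\alpha}<\infty$ follows by differentiating the mild formula); hence $\mathcal E(\phi(t)) \le \mathcal E(\phi_0)$ — first for $\phi_0\in H^{m_0}$ with $m_0$ large enough to legitimize these steps, then for general $\phi_0\in H^1$ by approximation and lower semicontinuity. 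This furnishes a uniform‑in‑time $H^1\cap L^4$ bound, which together with a local existence time depending only on the energy‑level norm, and propagation of the $H^{n_0}$ norm via a Gr\"onwall estimate on the mild formula, upgrades the solution to a global one in $C_t^0 H^{n_0}$.

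For the weighted bounds I would bootstrap on the rescaled mild formula. The first gain is immediate: feeding $\|E_{\alpha,1}(\tilde tA)\|_{H^{n_0}\to H^{n_0+2}}\le C\tilde t^{-1}$ into the linear term and the Duhamel smoothing into $\tilde\phi^3\in C_t^0 H^{n_0}$ yields $\tilde t\,\tilde\phi(\tilde t)\in C_t^0 H^{n_0+2}$, and interpolation with the $H^{n_0}$ bound covers the intermediate $k$. One then iterates, restarting the Caputo equation from a positive time $\tilde t_0$, so that the already‑improved regularity of $\tilde\phi(\tilde t_0)$ drives the next stage while the contribution of the history $[0,\tilde t_0]$ is carried along as a forcing term, gaining two spatial orders per stage and keeping track of the powers of $\tilde t$ that accumulate so that they assemble into the weight $\tilde t^{k/2}$. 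For the time‑derivative estimate, with $\phi_0\in H^{m_0}$, $m_0\ge 2$, I would differentiate the rescaled formula in $\tilde t$ using $\partial_{\tilde t}E_{\alpha,1}(\tilde tA) = \alpha^{-1}AE_{\alpha,\alpha}(\tilde tA)$ together with the spatial bounds already obtained; this costs exactly two derivatives and gives $\partial_{\tilde t}\tilde\phi\in C_t^0 H^{m_0-2}$. Equivalently one can read it off the equation $\partial_t^\alpha\phi = \nu\Delta\phi + \phi - \phi^3$ under the substitution $\tilde t = t^\alpha$.

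The main difficulty is the higher‑order regularity: the time‑fractional flow smooths by only two spatial derivatives per Duhamel application (because $E_{\alpha,\beta}(-x)$ decays algebraically rather than exponentially) and the Caputo derivative is nonlocal in time, so each bootstrap stage must be arranged carefully — controlling the singular $s^{\alpha-1}$ weight near the origin, and the precise rates at which the constants blow up as $\tilde t\to 0^+$, so that they telescope into the stated weights. A secondary point is logistical: the energy identity needs the temporal regularity produced at the end, so the pieces must be assembled in the right order — short‑time existence, enough temporal regularity to legitimize the energy computation, globalization, and then the full weighted bootstrap.
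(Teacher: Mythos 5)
Your proposal is correct and follows essentially the same route as the paper: a contraction argument on the rescaled Mittag--Leffler/Duhamel formulation, globalization via the energy bound $\mathcal E(\phi(t))\le\mathcal E(\phi_0)$ obtained from Corollary \ref{cor1} (case $i=1$) applied to $\int_0^T\int_0^t(t-s)^{-\alpha}\partial_s\phi\,\partial_t\phi\,ds\,dt$ for smooth data followed by mollification and $H^1$ stability, and then a smoothing bootstrap for the weighted spatial bounds and differentiation of the mild formula for \eqref{3.33}. The only place where the paper does something you gloss over is the uniform-in-time bound on $\partial_{\tilde t}\tilde\phi$ away from $\tilde t=0$, where the differentiated Duhamel term contains $(\partial_{\tilde t}\tilde\phi)$ at earlier times and the paper splits the $r$-integral and integrates by parts in $r$ on the middle piece to close the estimate; your version would need a (weakly singular) Gr\"onwall argument there, which is a detail rather than a gap.
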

\begin{rem}
The regularity assumptions on the initial data can be lowered further by a slightly more involved
analysis. However to simplify the presentation we shall not dwell on these technical issues here in this work.
\end{rem}
\begin{proof}
We shall sketch the proof.

Step 1: Existence of a local solution.  We first show that for some sufficiently small
$T_0>0$, there exists a unique solution $\tilde \phi \in 
C_{\tilde t}^0 ([0,T_0], H^{n_0} (\Omega) )$.  This is achieved by defining
$\tilde \phi^{(0)} \equiv 0$, and for $n\ge 0$, 
\begin{align} 
&\tilde{\phi} ^{(n+1)} (\tilde t) 
= E_{\alpha,1} ( \tilde t (\nu \Delta +1) ) \phi_0 \notag \\
&\qquad+  \alpha^{-1} \tilde t \int_0^1 E_{\alpha,\alpha}( \tilde t r (\nu \Delta +1) )
 \Bigl( {\tilde {\phi} }^{(n)} ( \tilde t (1-r^{\frac 1{\alpha} } )^{\alpha} )  \Bigr)^3dr,
 \quad \, 0<\tilde t \le T_0. 
\end{align}
One can then show uniform boundedness in $C_{\tilde t}^0 ([0,T_0], H^{n_0} (\Omega) )$
and contraction in $C_{\tilde t}^0 L^2$. 

Step 2: Higher spatial regularity of the local solution. Here we show for any integer $k\ge 0$,
\begin{align} 
\sup_{0<\tilde t \le T_0}
\| {\tilde t}^{\frac k 2}  \nabla^k \tilde \phi (\tilde t, \cdot) \|_{H^{n_0} (\Omega)}
\lesssim 1.
\end{align}
Observe that 
\begin{align}
\sup_{r>0, \tilde t >0} \| (\tilde t r)^{\frac 12} \nabla  E_{\alpha, \alpha}
( \tilde t r (\nu \Delta +1) )  g \|_{L^2(\Omega) } \lesssim \| g \|_{L^2(\Omega)}.
\end{align}
On the RHS of \eqref{e3.31},  if $r\to 0$, one has  $\tilde t (1-r^{\frac 1 {\alpha} } )^{\alpha}
\sim \tilde t$, and for $r\to 1$ one can appeal to the maximal smoothing of $
E_{\alpha,\alpha} (\tilde t r (\nu \Delta +1) )$.  The desired result then follows from bootstrapping estimates using the above observations.

Step 3: Temporal regularity of the local solution. Here we show 
\begin{align} 
\sup_{0<\tilde t \le T_0}
\| \partial_{\tilde t} \tilde \phi (\tilde t, \cdot) \|_{H^{m_0-2} (\Omega)}
\lesssim 1.
\end{align}
By \eqref{e3.31}, we have
\begin{align} 
(\partial_{\tilde t} \tilde{\phi} )(\tilde t)
&= E_{\alpha,1} ( \tilde t (\nu \Delta +1) )  (\nu \Delta +1) \phi_0 \label{3.38} \\
& \quad+ \alpha^{-1}  \int_0^1 E_{\alpha,\alpha}( \tilde t r (\nu \Delta +1) )
 \Bigl(\tilde {\phi}  ( \tilde t (1-r^{\frac 1{\alpha} } )^{\alpha} )  \Bigr)^3dr \label{3.39} \\
&\quad+ \alpha^{-1} \int_0^1  \tilde t r (\nu \Delta +1)  E_{\alpha,\alpha}( \tilde t r (\nu \Delta +1) )
 \Bigl(\tilde {\phi}  ( \tilde t (1-r^{\frac 1{\alpha} } )^{\alpha} )  \Bigr)^3dr \label{3.40}\\
 & \qquad+\alpha^{-1} \tilde t \int_0^1 E_{\alpha,\alpha}( \tilde t r (\nu \Delta +1) )
 \Bigl( 
 3 \bigl( 
 \tilde {\phi}  ( \tilde t (1-r^{\frac 1{\alpha} } )^{\alpha} )  \bigr)^2
 (\partial_{\tilde t } \tilde \phi)(\tilde t(1-r^{\frac 1{\alpha}})^{\alpha} )
 \cdot (1-r^{\frac 1 {\alpha} })^{\alpha}
 \Bigr)dr.  \label{3.41}
\end{align}
Clearly this yields for some $0<T_1\le T_0$ sufficiently small, 
\begin{align} 
\sup_{0<\tilde t \le T_1}
\| \partial_{\tilde t} \tilde \phi (\tilde t, \cdot) \|_{H^{m_0-2} (\Omega)}
\lesssim 1.
\end{align}
To obtain the uniform estimate of $\partial_{\tilde t} \tilde \phi$ for $T_1\le \tilde t\le T_0$,
we modify \eqref{3.41} as follows. Observe that
\begin{align}
 \partial_{\tilde t} \Bigl(  \tilde \phi ( \tilde t (1-r^{\frac 1 {\alpha} } )^{\alpha} ) 
\Bigr) &= (\partial_{\tilde t} \tilde \phi) (\tilde t (1-r^{\frac 1{\alpha} })^{\alpha} ) 
\cdot (1-r^{\frac 1 {\alpha} } )^{\alpha}; \\
  \partial_{r} \Bigl(  \tilde \phi ( \tilde t (1-r^{\frac 1 {\alpha} } )^{\alpha} ) 
\Bigr) &= (\partial_{\tilde t} \tilde \phi) (\tilde t (1-r^{\frac 1{\alpha} })^{\alpha} ) 
\cdot  (1-r^{\frac 1 {\alpha} } )^{\alpha-1} \cdot (-r^{\frac 1{\alpha} -1} ) \notag \\
&= \partial_{\tilde t} \Bigl(  \tilde \phi ( \tilde t (1-r^{\frac 1 {\alpha} } )^{\alpha} ) 
\Bigr) \cdot  \Bigl( - \frac { r^{\frac 1 {\alpha}-1}} {1-r^{\frac  1 {\alpha} } } \Bigr).
\end{align}
We then rewrite the integral in \eqref{3.41} as 
\begin{align}
&\int_0^1 E_{\alpha,\alpha}( \tilde t r (\nu \Delta +1) )
 \Bigl( 
 3 \bigl( 
 \tilde {\phi}  ( \tilde t (1-r^{\frac 1{\alpha} } )^{\alpha} )  \bigr)^2
 (\partial_{\tilde t } \tilde \phi)(\tilde t(1-r^{\frac 1{\alpha}})^{\alpha} )
 \cdot (1-r^{\frac 1 {\alpha} })^{\alpha}
 \Bigr)dr \notag\\
=& \int_0^{\delta_0} E_{\alpha,\alpha}( \tilde t r (\nu \Delta +1) )
 \Bigl( 
 3 \bigl( 
 \tilde {\phi}  ( \tilde t (1-r^{\frac 1{\alpha} } )^{\alpha} )  \bigr)^2
 (\partial_{\tilde t } \tilde \phi)(\tilde t(1-r^{\frac 1{\alpha}})^{\alpha} )
 \cdot (1-r^{\frac 1 {\alpha} })^{\alpha}
 \Bigr)dr \label{3.45} \\
 & \quad+ \int_{1-\delta_0}^1 E_{\alpha,\alpha}( \tilde t r (\nu \Delta +1) )
 \Bigl( 
 3 \bigl( 
 \tilde {\phi}  ( \tilde t (1-r^{\frac 1{\alpha} } )^{\alpha} )  \bigr)^2
 (\partial_{\tilde t } \tilde \phi)(\tilde t(1-r^{\frac 1{\alpha}})^{\alpha} )
 \cdot (1-r^{\frac 1 {\alpha} })^{\alpha}
 \Bigr)dr \label{3.46} \\
 & \quad + \int_{\delta_0}^{1-\delta_0} E_{\alpha,\alpha}( \tilde t r (\nu \Delta +1) )
 \left(
\partial_r \Bigl( \bigl( 
 \tilde {\phi}  ( \tilde t (1-r^{\frac 1{\alpha} } )^{\alpha} )  \bigr)^3 \Bigr)
 \cdot ( - \frac {1-r^{\frac 1 {\alpha} } } {r^{\frac 1 {\alpha}-1} } )
 \right)dr. \label{3.47}
\end{align}
Clearly \eqref{3.45} and \eqref{3.46} can be treated by taking $\delta_0$ sufficiently small.
The term \eqref{3.47} can be estimated by a further integration by part in $r$. 
Thus we obtain uniform estimate of $\partial_{\tilde t} \tilde \phi$ for $T_1\le \tilde t\le T_0$.

Step 4: Continuation of the local solution: naive extension. Here we show that if $\phi
\in C([0,T_1], H^{n_0})$ solves \eqref{e3.31} on $0<t\le T_1$, then one can
extend the solution to the time interval $[0, T_1+\delta_1]$ for some $\delta_1>0$. 
We first rewrite \eqref{e3.31} as
\begin{align}
\phi(t) = E_{\alpha,1} (t^{\alpha} (\nu \Delta +1) ) \phi_0
+ \int_0^t (t-s)^{\alpha-1} E_{\alpha,\alpha}
( (t-s)^{\alpha} (\nu \Delta +1) ) ( \phi(s) ^3) ds.
\end{align}
For $t\ge T_1$, we make the decomposition
\begin{align}
\phi(t) & = E_{\alpha,1} (t^{\alpha} (\nu \Delta +1) ) \phi_0
+ \int_0^{T_1} (t-s)^{\alpha-1} E_{\alpha,\alpha}
( (t-s)^{\alpha} (\nu \Delta +1) ) ( \phi(s) ^3 )ds  \label{3.55}\\
& \quad +  \int_{T_1}^{t} (t-s)^{\alpha-1} E_{\alpha,\alpha}
( (t-s)^{\alpha} (\nu \Delta +1) ) ( \phi(s) ^3) ds. \label{3.56}
\end{align}
Since the pieces in \eqref{3.55} are already known, one can perform a contraction to obtain
the unknown in \eqref{3.56} by taking $\delta_1>0$ sufficiently small.

Step 5: Continuation of the local solution: non-blowup criterion. Here we show that if $\phi
\in C([0,T_2), H^{n_0})$ solves \eqref{e3.31} on $0<t<T_2$, and 
\begin{align} \label{3.57}
\sup_{0<t<T_2} \| \phi(t,\cdot) \|_{H^1} <\infty.
\end{align}
then one can
extend the solution  to the time interval $[0, T_2+\delta_2]$ for some $\delta_2>0$. 
Define 
\begin{align}
y_{T_2}
= E_{\alpha,1} (T_2^{\alpha} (\nu\Delta +1) )
\phi_0 +
\int_0^{T_2} 
(T_2-s)^{\alpha-1}
E_{\alpha,\alpha}((T_2-s)^{\alpha} (\nu \Delta+1) )
( \phi(s)^3) ds.
\end{align}
By \eqref{3.57} and Lebesgue Dominated Convergence, we have
\begin{align}
\lim_{t\to T_2} \phi(t) = y_{T_2}, \quad \text{in $H^1$}.
\end{align}
By bootstrapping estimates we then obtain $\phi \in C([0,T_2], H^{n_0})$. By using Step 4
we can then extend the solution past $T_2$.

Step 6: Global wellposedness for smooth initial data. Here we show that if $\phi_0
\in C^{\infty}$, then the corresponding solution $\phi$ exists globally in time and
\begin{align} \label{3.60}
\mathcal E ( \phi (t) ) \le \mathcal  E ( \phi_0), \qquad \forall\, t>0.
\end{align}
Since $\phi_0 \in C^{\infty}$, it is not difficult to check that
$\| \partial_{\tilde t} \tilde \phi (\tilde t, \cdot ) \|_{\infty} \lesssim 1$
for any $t>0$, and
\begin{align}
\sup_{0<t\le T} \Bigl( \| t^{1-\alpha} (\partial_t \phi)(t,\cdot) \|_{\infty} 
+ \| (\partial_t^{\alpha} \phi)(t,\cdot)\|_{\infty}  \Bigr) \lesssim 1,
\end{align}
provided $\phi$ is the solution on $[0,T]$. Now observe that
\begin{align}
\mathcal E ( \phi(0) )
-\mathcal E ( \phi (T) )
& = - \int_0^T \frac d {dt} \Bigl( \mathcal E ( \phi(t) ) \Bigr) dt \notag \\
& = \int_0^T \int_{\Omega} \partial_t^{\alpha} \phi \partial_t \phi dx dt \notag \\
& = \frac 1{\Gamma(1-\alpha)} \int_{\Omega}  \int_0^T \int_0^t
\frac {(\partial_t \phi)(s,x) (\partial_t \phi)(t,x) } { (t-s)^{\alpha}}
ds dt dx\ge 0,
\end{align}
where the last inequality follows from Corollary \ref{cor1}.
Thus \eqref{3.60} follows and we can extend the solution globally in time. 

Step 7: Global wellposedness for $H^{n_0}$, $n_0\ge 1$ initial data.  
Here we show that if $\phi_0
\in H^{n_0}$, then the corresponding solution $\phi$ exists globally in time and
\begin{align}  \label{3.64}
\mathcal E ( \phi (t) ) \le \mathcal  E ( \phi_0), \qquad \forall\, t>0.
\end{align}
To show this, we take $\phi_0^{(n)} \in C^{\infty}(\Omega)$ such that
$\|\phi_0^{(n)} \|_{H^1} \le 2 \| \phi_0 \|_{H^1}$, 
$\phi_0^{(n)} \to \phi_0$ in $H^1(\Omega)$ as $n\to \infty$.
Denote by $\phi^{(n)}$ the solution corresponding to the initial data 
$\phi_0^{(n)}$.    Clearly
\begin{align}
\mathcal E ( \phi^{(n)} (t) ) \le \mathcal  E ( \phi_0^{(n)} )
\lesssim \mathcal E ( \phi_0), \qquad \forall\, t>0.
\end{align}
By a simple $H^1$ stability estimate, it is not difficult to check that
$\phi^{(n)} \to \phi $ in $C_t^0 H^1$.   The estimate \eqref{3.64} then
easily follows. Further bootstrapping gives the needed spatial and temporal
regularity results. We omit further details.
\end{proof}

Next we consider the time-fractional Cahn-Hilliard equation posed on the
periodic torus $\Omega= [-\pi, \pi]^d$ in physical dimensions $d\le 3$:
\begin{align}  \label{CH3.48}
\begin{cases}
\partial_t^{\alpha} \phi =  -\nu \Delta^2 \phi -\Delta \phi +\Delta(\phi^3),
\qquad (t,x) \in (0,T]\times \Omega;\\
\phi \Bigr|_{t=0} = \phi_0,  \quad \text{in $\Omega$},
\end{cases}
\end{align}
where $0<\alpha<1$, $\nu>0$, and $\phi_0 \in H^{n_0} (\Omega)$, $n_0\ge 1$ with
$\int_{\Omega} \phi_0 dx =0$.

\begin{defn} \label{def3.2}
Let $T>0$. We say $\phi \in C([0,T], H^{n_0} (\Omega))$ is a solution to 
\eqref{CH3.48} if 
\begin{align} 
\phi(t) = E_{\alpha,1} (t^{\alpha} (-\nu \Delta^2 -\Delta) ) 
\phi_0 + \int_0^t s^{\alpha-1} \Delta E_{\alpha,\alpha} ( s^{\alpha}
(-\nu \Delta^2 -\Delta) ) 
\Bigl( \phi(t-s)^3\Bigr) ds, \qquad\forall\, 0<t \le T.
\end{align}
Equivalently for
$\tilde {\phi}(\tilde t, x) = \tilde {\phi}(t^{\alpha},x) = \phi(t,x)$, the requirement is

\begin{align} 
\tilde{\phi} (\tilde t)
= E_{\alpha,1} ( \tilde t (-\nu \Delta^2 -\Delta) ) \phi_0
+ \alpha^{-1} \tilde t \int_0^1 \Delta E_{\alpha,\alpha}( \tilde t r (-\nu \Delta^2 -\Delta) )
 \Bigl(\tilde {\phi}  ( \tilde t (1-r^{\frac 1{\alpha} } )^{\alpha} )^3  \Bigr)dr,
 \qquad \forall\, 0<\tilde t \le T^{\alpha}.
\end{align}
\end{defn}

\begin{thm}[Wellposedness and regularity for time-fractional CH] \label{thm3.2}
Consider \eqref{CH3.48} with $\phi_0 \in H^{n_0} (\Omega)$, $n_0\ge 1$ and $\int_{\Omega} \phi_0 dx =0$. 
There exists a unique time-global solution $\phi \in C_t^0 H^{n_0}$ 
corresponding to the initial data $\phi_0$ and 
\begin{align} \label{3.69}
\mathcal E (\phi(t) ) \le \mathcal E( \phi_0), \qquad\forall\, t>0.
\end{align}
 Moreover, for any $\tilde T>0$ and integer
$k\ge 0$ we have
\begin{align} 
\sup_{0<\tilde t \le \tilde T}
\| {\tilde t}^{\frac k 4} \tilde \phi (\tilde t, \cdot) \|_{H^{n_0+k} (\Omega)}
\le C_{\phi_0, \nu, \alpha, \tilde T, k, n_0} <\infty,
\end{align}
where $C_{\phi_0, \nu, \alpha, \tilde T, k, n_0}>0$ depends on ($\phi_0$, $\nu$, $\alpha$, $\tilde T$, $k$, $n_0$). If $\phi_0 \in H^{m_0}(\Omega)$, $m_0\ge 4$, then 
we also have $\partial_{\tilde t} \tilde \phi \in C_{\tilde t}^0 H^{m_0-4}$, and  for any
$\tilde T>0$, 
\begin{align} 
\sup_{0<\tilde t \le \tilde T}
\| \partial_{\tilde t} \tilde \phi (\tilde t, \cdot) \|_{H^{m_0-4} (\Omega)}
\le \tilde C_{\phi_0, \nu, \alpha, \tilde  T,  m_0} <\infty,
\end{align}
where $\tilde C_{\phi_0, \nu, \alpha, \tilde T,  m_0}>0$ depends on ($\phi_0$, $\nu$, $\alpha$, $\tilde T$, $m_0$).
\end{thm}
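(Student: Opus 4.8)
The plan is to mimic the proof of Theorem \ref{thm3.1} essentially line by line, adapting it to the three features of \eqref{CH3.48} not present in \eqref{AC3.3}: the fourth-order linear operator $\mathcal{L}:=-\nu\Delta^2-\Delta$, the extra Laplacian in front of the cubic term, and the constraint $\int_\Omega\phi_0\,dx=0$. First I would record that the mean-zero property is propagated: integrating \eqref{CH3.48} over $\Omega$ annihilates the right-hand side, so $\int_\Omega\phi(t)\,dx=0$ for all $t$, and we may work on the mean-zero subspace, on which $-\Delta$ is invertible and Poincar\'e's inequality holds. On that subspace the Fourier symbol of $\mathcal{L}$ equals $-\nu|k|^4+|k|^2$, which is strictly negative except for the finitely many ``unstable'' modes with $|k|^2\le1/\nu$; on those modes $E_{\alpha,1}(\tilde t\mathcal{L})$ and $E_{\alpha,\alpha}(\tilde t\mathcal{L})$ are merely bounded on each fixed interval $[0,\tilde T]$ (which suffices), and on the rest the fourth-order smoothing gives, for $0\le\theta<2$,
\begin{align}
\sup_{0<r\le1,\ 0<\tilde t\le\tilde T}\ \bigl\|(\tilde t r)^{\theta}(-\mathcal{L})^{\theta}\,E_{\alpha,\alpha}(\tilde t r\,\mathcal{L})\,g\bigr\|_{L^2(\Omega)}\ \lesssim_{\nu,\alpha,\tilde T}\ \|g\|_{L^2(\Omega)}.
\end{align}
Since $\mathcal{L}$ is of order four while $\Delta$ is of order two, this lets $\Delta E_{\alpha,\alpha}(\tilde t r\mathcal{L})$ absorb the Laplacian in front of the cubic term at the cost of a weight $(\tilde t r)^{-\gamma}$ with $\gamma<1$; combined with the prefactor $\tilde t\int_0^1(\cdot)\,dr$, with $\int_0^1 r^{-\gamma}\,dr<\infty$, and with $H^1(\Omega)\hookrightarrow L^6(\Omega)$ for $d\le3$ so that $\phi^3\in L^2$ whenever $\phi\in H^1$, the Picard iteration on the $\tilde t$-Duhamel identity of Definition \ref{def3.2} closes exactly as in Steps 1--2 of Theorem \ref{thm3.1}, yielding a unique local solution $\tilde\phi\in C^0_{\tilde t}([0,T_0],H^{n_0})$, and bootstrapping produces the weighted spatial bounds $\sup_{0<\tilde t\le T_0}\|\tilde t^{k/4}\nabla^k\tilde\phi\|_{H^{n_0}}\lesssim1$, the exponent $k/4$ replacing $k/2$ because each derivative now costs only $\tilde t^{-1/4}$ against a fourth-order operator.

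For temporal regularity I would differentiate the $\tilde t$-Duhamel identity in $\tilde t$, producing four terms that mirror \eqref{3.38}--\eqref{3.41} with $(\nu\Delta+1)$ replaced by $\mathcal{L}$ and with the extra $\Delta$ inside the Mittag--Leffler factors; the term coming from $\partial_{\tilde t}\bigl(E_{\alpha,1}(\tilde t\mathcal{L})\phi_0\bigr)$ costs one application of $\mathcal{L}$ to $\phi_0$ and hence lands (uniformly on $[0,\tilde T]$) in $H^{m_0-4}$, which is the source of the hypothesis $m_0\ge4$. The delicate contribution is again the one near $r=1$, where $\tilde\phi$ is evaluated at $\tilde t(1-r^{1/\alpha})^{\alpha}\to0$; I would handle it exactly as in \eqref{3.45}--\eqref{3.47}, splitting $\int_0^1$ into $[0,\delta_0]$, $[\delta_0,1-\delta_0]$ and $[1-\delta_0,1]$, converting the middle piece via $\partial_r\bigl(\tilde\phi(\tilde t(1-r^{1/\alpha})^{\alpha})^3\bigr)$ and one integration by parts in $r$, and absorbing the two end pieces by taking $\delta_0$ small. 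Continuation then goes as in Steps 4--5: a short-time ``naive'' extension (the already-constructed part of the Duhamel integral being known, contract for the remainder) together with a non-blowup criterion -- an a priori bound $\sup_{t<T_2}\|\phi(t)\|_{H^1}<\infty$ forces $\phi(t)\to y_{T_2}$ in $H^1$ as $t\to T_2$, the relevant time integrand being $\sim(T_2-s)^{\alpha-1}(T_2-s)^{-\alpha/2}=(T_2-s)^{\alpha/2-1}$, which is integrable -- after which bootstrapping upgrades the limit to $H^{n_0}$ and the naive extension pushes past $T_2$.

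Finally, for global existence and \eqref{3.69}, I would first establish \eqref{3.69} for $\phi_0\in C^\infty$ of mean zero. Writing $\mu=-\nu\Delta\phi+\phi^3-\phi$, one has $\partial_t^\alpha\phi=\Delta\mu$ and, on the mean-zero subspace, $\mu-\bar\mu=-(-\Delta)^{-1}\partial_t^\alpha\phi$; since $\int_\Omega\partial_t\phi\,dx=0$, the identity $\mathcal E(\phi_0)-\mathcal E(\phi(T))=-\int_0^T\!\!\int_\Omega\mu\,\partial_t\phi\,dx\,dt$ becomes
\begin{align}
\mathcal E(\phi_0)-\mathcal E(\phi(T))=\int_0^T\bigl\langle\partial_t^\alpha\phi,\ \partial_t\phi\bigr\rangle_{\dot H^{-1}}\,dt=\sum_{k\ne0}\frac{1}{|k|^2}\int_0^T(\partial_t^\alpha\hat\phi_k)(t)\,\overline{(\partial_t\hat\phi_k)(t)}\,dt,
\end{align}
where $\hat\phi_k(t)$ is the $k$-th Fourier coefficient of $\phi(t)$. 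For smooth data the regularity bounds above give $\sup_{0<s\le T}|(\partial_s\hat\phi_k)(s)|\,s^{1-\alpha}<\infty$ for each $k$, so applying Corollary \ref{cor1} (the case $\beta_1\equiv1$, rescaled to $(0,T)$) mode by mode, split into real and imaginary parts (the imaginary parts canceling between $k$ and $-k$), shows every term of the sum is $\ge0$; hence $\mathcal E(\phi(T))\le\mathcal E(\phi_0)$. This bounds $\|\phi(t)\|_{H^1}$ uniformly in $t$, so the non-blowup criterion gives global existence for smooth data, and one then passes to general $\phi_0\in H^{n_0}$ ($n_0\ge1$, mean zero) by smooth mean-zero approximation together with a $C^0_{\tilde t}L^2$-stability estimate, as in Step 7, the weighted regularity bounds following by the same bootstrap. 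I expect the main obstacle to be the temporal-regularity step -- keeping the $r$-integration by parts and the correct fourth-order $\tilde t$-weights straight in the presence of the extra $\Delta$ on the nonlinearity -- together with checking that the $\Delta E_{\alpha,\alpha}(\tilde t r\mathcal{L})$ smoothing is genuinely uniform despite the finitely many unstable low modes; everything else is a routine transcription of the Allen--Cahn argument.
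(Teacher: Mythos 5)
Your proposal is correct and follows essentially the same route as the paper, which simply transcribes the Allen--Cahn argument to the fourth-order operator and establishes \eqref{3.69} by pairing $\partial_t^\alpha\phi$ with $(-\Delta)^{-1}\partial_t\phi$ on the mean-zero subspace, i.e.\ by applying Corollary \ref{cor1} to $|\nabla|^{-1}\phi$; your mode-by-mode Fourier version of that positivity is just the Plancherel transcription of the paper's physical-space computation. The additional details you supply (the $\tilde t^{k/4}$ weights, the treatment of the finitely many unstable modes, the $(T_2-s)^{\alpha/2-1}$ integrability, the $m_0\ge4$ bookkeeping) are exactly the "pedestrian details" the paper omits, and they check out.
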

\begin{proof}
The proof is similar to Theorem \ref{thm3.1}. For the energy bound \eqref{3.69}, we note that 
$\phi(t,\cdot)$ has zero mean for $t>0$. By using mollification of initial data and $H^1$
stability we can assume $\phi_0$ is smooth and has mean zero. One can check that
$\partial_t \phi $ has mean zero and $(-\Delta)^{-1} \partial_t \phi $ is well-defined. 
The following computation can be rigorously justified:
\begin{align}
\mathcal E (\phi(0) )
-\mathcal E ( \phi(T) )
& = \int_0^T \int_{\Omega}
\partial_t^{\alpha} \phi  (-\Delta)^{-1} \partial_t \phi dx dt \notag \\
& = \int_0^T \int_{\Omega}
\partial_t^{\alpha} (|\nabla|^{-1} \phi )  \partial_t |\nabla|^{-1} \phi dx dt \ge 0.
\end{align}
 We omit further pedestrian details.
\end{proof}

We now complete the proof of Theorem \ref{thm2}.

\begin{proof}[Proof of Theorem \ref{thm2}]
The first three statements follow from Theorem \ref{thm3.1} and \ref{thm3.2}.  
For \eqref{1.21A} we focus on the Allen-Cahn case. The proof for the Cahn-Hilliard
case is similar. Now for $t_0>0$,
\begin{align}
-\Gamma(1-\alpha)
\partial_t^{\alpha} ( \mathcal E ( \phi (t) ) ) \Bigr|_{t=t_0}
& = \int_0^{t_0} \frac 1 {(t_0-t)^{\alpha} }
\int_{\Omega}
\partial_t^{\alpha} \phi \partial_t \phi dx dt \notag \\
& = \frac 1 {\Gamma(1-\alpha)}
\int_{\Omega}
\int_0^{t_0} \frac 1{(t_0-t)^{\alpha} }
\int_0^t \frac { (\partial_t \phi)(t,x) (\partial_t \phi)(s, x) }
{(t-s)^{\alpha} } ds dt dx \ge 0,
\end{align}
where the last inequality follows from a rescaled version of Corollary \ref{cor1}.
By using the regularity estimates derived earlier, it is not difficult to justify the
above computation rigorously. Thus \eqref{1.21A} follows.
The proof of \eqref{1.24A} follows along similar lines. By mollification and $H^1$ stability we may assume the initial data $\phi_0$ is smooth. It then suffices for us to check
$\frac d {dt} E_{\omega}(t)<0$ for any $t>0$. The proof is straightforward.  We omit further
details.
\end{proof}

\section{Several other proofs for positive-definiteness}
In this section we collect several proofs for semi-positive definiteness of the kernel functions
mentioned in preceding sections.  For example, for any $u\in C[0,T]$, and
$\alpha \in (0,1)$, it holds that
\begin{align*}
\int_0^t \int_0^s
\frac{s^{\alpha} u(s) u(\tau)} { (t-s)^{\alpha} (s-\tau)^{\alpha}}
d\tau ds \ge 0, \quad\,\forall\, t\in (0,T].
\end{align*}
Now we give a short argument. 
First note that by scaling we may assume $t=1$. It then suffices to consider
the kernel 
\begin{align*}
k(s,\tau) = (s-\tau)^{-\alpha} \cdot(\frac {s} {1-s} )^{\alpha}, 
\quad \text{for $1>s>\tau>0$}.
\end{align*}
One can  extend the kernel to the whole domain by regarding
$s=\max\{s, \tau\}$. 

\begin{thm}
$k(s,\tau)$ is a positive definite kernel on $[0,1]^2$.
\end{thm}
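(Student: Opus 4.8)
The plan is to show directly that the kernel $k(s,\tau) = (s-\tau)^{-\alpha} (s/(1-s))^{\alpha}$, symmetrized via $s = \max\{s,\tau\}$, is admissible in the sense of Definition \ref{def1.1}, after which Theorem \ref{thm1} gives semi-positive definiteness immediately. The key algebraic observation is the factorization, for $0<\tau<s<1$,
\begin{align*}
k(s,\tau) = \frac{s^{\alpha}}{(1-s)^{\alpha}(s-\tau)^{\alpha}}
= (1-s)^{-\alpha}(1-\tau)^{-\alpha} \cdot \left( \frac{s(1-\tau)}{s-\tau} \right)^{\alpha}.
\end{align*}
The first factor has the product form $a(s)b(\tau)$ with $a(x) = b(x) = (1-x)^{-\alpha}$, which is nonincreasing in its first slot and nondecreasing in its second — exactly the last admissible example in Remark after Definition \ref{def1.1}. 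So the whole burden is to show that $K_1(s,\tau) := \left( s(1-\tau)/(s-\tau) \right)^{\alpha}$ has $\partial_s K_1 \le 0$, $\partial_\tau K_1 \ge 0$, $\partial_{s\tau} K_1 \le 0$ on $D_-$; then one checks that the product of two kernels each satisfying these three sign conditions (with the second factor being a genuine product $a(s)b(\tau)$) again satisfies them, and finally applies the admissibility criterion.

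First I would reduce $K_1$ to its logarithm. Since $K_1 > 0$ on $D_-$, Remark \ref{rem1.2} (with $h(z) = z^{\alpha}$ after first reducing to $\alpha = 1$, or more simply Remark \ref{rem1.3} applied to $\exp$) tells me it suffices to verify the three sign conditions for
\begin{align*}
K_2(s,\tau) := \frac{1}{\alpha}\log K_1(s,\tau) = \log s + \log(1-\tau) - \log(s-\tau).
\end{align*}
This is precisely the function appearing in the Remark after Theorem \ref{thm2} (with the roles of the variables matching $x \leftrightarrow s$, $y \leftrightarrow \tau$), and the computations are elementary: $\partial_s K_2 = \frac1s - \frac1{s-\tau} = \frac{-\tau}{s(s-\tau)} \le 0$, $\partial_\tau K_2 = \frac{-1}{1-\tau} + \frac1{s-\tau} = \frac{1-s}{(1-\tau)(s-\tau)} \ge 0$, and $\partial_{s\tau} K_2 = \partial_\tau\!\left(\frac{-\tau}{s(s-\tau)}\right) = -\frac{1}{s}\cdot\frac{s}{(s-\tau)^2} = \frac{-1}{(s-\tau)^2} \le 0$, all strictly controlled on $D_- = \{0<\tau<s<1\}$.

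The remaining step is to combine factors. Writing $k = \phi(s,\tau)\cdot\psi(s,\tau)$ with $\phi(s,\tau) = (1-s)^{-\alpha}(1-\tau)^{-\alpha}$ and $\psi = K_1$, I note $\phi$ has $\partial_s\phi \le 0$, $\partial_\tau\phi \ge 0$, and because $\phi$ separates, $\partial_{s\tau}\phi = \alpha(1-s)^{-\alpha-1}\cdot\alpha(1-\tau)^{-\alpha-1} \ge 0$ — note the sign is the \emph{wrong} way for the product rule, so I must be careful. Expanding $\partial_{s\tau}(\phi\psi) = \phi\,\partial_{s\tau}\psi + \partial_s\phi\,\partial_\tau\psi + \partial_\tau\phi\,\partial_s\psi + \psi\,\partial_{s\tau}\phi$: the first three terms are $\le 0$ since each consists of factors with the right signs, but the last term $\psi\,\partial_{s\tau}\phi \ge 0$ is problematic. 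The cleanest fix, and the step I expect to be the real obstacle, is to avoid splitting off $\phi$ as a product and instead work with $k$ directly via its logarithm: set $L(s,\tau) = \frac1\alpha\log k(s,\tau) = \log s - \log(s-\tau) = \log\!\frac{s}{s-\tau}$. Then $\partial_s L = \frac{-\tau}{s(s-\tau)} \le 0$, $\partial_\tau L = \frac{1}{s-\tau} \ge 0$, $\partial_{s\tau}L = \frac{-1}{(s-\tau)^2} \le 0$ on $D_-$ — the $(1-s)^{-\alpha}(1-\tau)^{-\alpha}$ factor has simply dropped out of the mixed and single derivatives of the logarithm in a harmless way because $\log[(1-s)(1-\tau)]$ separates and has a nonnegative (not nonpositive) mixed derivative; so I should double-check: $\partial_{s\tau}\log(1-s) = 0$. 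Indeed $\log(1-s)$ depends only on $s$, so its $\partial_\tau$ derivative vanishes, and likewise $\log(1-\tau)$ contributes nothing to $\partial_s$; hence $\partial_s L$, $\partial_\tau L$, $\partial_{s\tau}L$ are computed purely from $\log s - \log(s-\tau)$ plus the separated single-variable pieces, giving $\partial_s L = \frac1s - \frac1{s-\tau} - \frac{0}{\cdots}$, wait — $\partial_s\log(1-s) = -\frac1{1-s} \le 0$, which only helps. So $\partial_s L = \frac1s - \frac1{s-\tau} - \frac1{1-s} \le 0$, $\partial_\tau L = \frac1{s-\tau} - \frac1{1-\tau} \ge 0$ exactly as in $K_2$ above, and $\partial_{s\tau}L = -\frac1{(s-\tau)^2} \le 0$. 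Thus $L$ satisfies all three conditions, so by Remark \ref{rem1.3} $k = \exp(\alpha L)$ does too (it is positive and $\exp$ is convex increasing), and applying Theorem \ref{thm1} — reading $k$ as its own $\tilde K$ with $\psi \equiv 1$ after the $C^2$ extension off the diagonal, or more carefully via a limiting $\epsilon$-regularization as in Corollary \ref{cor1} Case 1 to handle the boundary behavior at $s=1$ — yields that $k$ is semi-positive definite on $[0,1]^2$, which is the claim.
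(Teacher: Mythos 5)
Your final step contains a genuine error. You discard the decomposition $k=\phi\cdot K_1$ (correctly observing that the product rule fails because $\partial_{s\tau}\phi\ge 0$ for $\phi=(1-s)^{-\alpha}(1-\tau)^{-\alpha}$) and instead try to verify the three sign conditions for $k$ itself via $L=\tfrac1\alpha\log k$. But $\tfrac1\alpha\log k=\log s-\log(1-s)-\log(s-\tau)$, and the term $-\log(1-s)$ contributes $+\tfrac1{1-s}$ to $\partial_sL$, not $-\tfrac1{1-s}$ as you wrote. Hence
\begin{align*}
\partial_s L=\frac1s+\frac1{1-s}-\frac1{s-\tau}=\frac{1}{s(1-s)}-\frac{1}{s-\tau},
\end{align*}
which is positive whenever $\tau<s^2$ (e.g.\ $s=\tfrac12$, $\tau=\tfrac18$ gives $4-\tfrac83>0$). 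So $k$ is \emph{not} nonincreasing in $s$ on $D_-$, the hypothesis $\partial_x\tilde K\le 0$ fails for $\tilde K=k$, and this route collapses. The repair is exactly what the weight $\psi$ in Definition \ref{def1.1} is for: take $\psi(x)=(1-x)^{\alpha}$, so that $\tilde k(s,\tau)=k(s,\tau)\psi(s)\psi(\tau)=\bigl(s(1-\tau)/(s-\tau)\bigr)^{\alpha}=K_1$, which is precisely the kernel whose logarithm $K_2=\log s+\log(1-\tau)-\log(s-\tau)$ you already verified satisfies all three sign conditions. In other words, the offending factor $(1-s)^{-\alpha}(1-\tau)^{-\alpha}$ must be absorbed into the diagonal weight of the definition, not treated as a kernel factor and not folded into the logarithm of $k$. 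With that correction (plus an $\epsilon$-regularization of $(s-\tau)^{-\alpha}$ on the diagonal to get strong admissibility of the approximants, and a word on passing from the weighted quadratic form $\int\!\int\tilde k\,(\phi/\psi)(\phi/\psi)$ back to $\int\!\int k\,\phi\phi$), your argument goes through.

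It is worth noting that your (corrected) route is genuinely different from the paper's. The paper writes, for $s>\tau$,
\begin{align*}
k(s,\tau)=\frac{1}{(1-s)^{\alpha}(1-\tau)^{\alpha}}\Bigl(1-\frac{1-s}{1-\tau}\cdot\frac{\tau}{s}\Bigr)^{-\alpha},
\end{align*}
expands $(1-z)^{-\alpha}$ in a power series with nonnegative coefficients, and invokes Lemma \ref{lem1} (positive definiteness of $\tau/s$ and $(1-s)/(1-\tau)$ via $e^{-|x-y|}$) together with the Schur product theorem; the prefactor is a diagonal congruence and is harmless. That argument yields true positive definiteness of the matrix/kernel with no smoothness or regularization issues, whereas the admissibility route gives the integrated inequality through integration by parts and a limiting procedure. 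Both are legitimate; yours, once the weight is inserted correctly, is essentially the paper's own Remark following Theorem \ref{thm2} specialized to $\omega\equiv 1$.
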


\begin{proof}
Observe the identity ($s>\tau$):
\begin{align*}
(s-\tau)^{-\alpha}
\cdot(\frac s {1-s} )^{\alpha} 
= \frac 1 {(1-s)^{\alpha} (1-\tau)^{\alpha}}
\cdot (1- \frac {1-s}{1-\tau} \cdot \frac {\tau} s)^{-\alpha} 
\end{align*}
One can then expand into a positive power series and use Lemma \ref{lem1}
below.

\end{proof}

\begin{lem} \label{lem1}
$\tau/s$ ($s>\tau$) is\footnote{Symmetrize in the usual way.} a positive definite kernel, similarly 
$\frac {1-s} {1-\tau}$ ($s>\tau$) is also
a positive definite kernel. Consequently $\frac {\tau}s \cdot  \frac {1-s} {1-\tau}$ is a positive
definite kernel.\footnote{Note that point-wise product of two postive-definite kernels
is still a positive-definite kernel!}
\end{lem}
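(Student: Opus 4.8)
\textbf{Proof plan for Lemma \ref{lem1}.}
The plan is to recognize each of the two kernels as a classical positive-definite function after an elementary change of variables, and then to obtain the product statement from the Schur (Hadamard) product theorem. First I would treat $K_1(s,\tau):=\min(s,\tau)/\max(s,\tau)$, which is exactly the symmetrization of $\tau/s$ (with $s=\max\{s,\tau\}$). Substituting $s=e^{u}$, $\tau=e^{v}$ turns $K_1$ into $e^{-|u-v|}$, and since $\int_{\mathbb R}e^{-|\xi|}e^{-ix\xi}\,d\xi = 2/(1+x^{2})\ge 0$, Bochner's theorem shows $e^{-|u-v|}$ is positive definite on $\mathbb R$; pulling back along the injective map $s\mapsto\log s$ shows $K_1$ is positive definite on $(0,1)$. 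An entirely elementary alternative is to write $\min(s,\tau)=\int_{0}^{1}\mathbf 1_{\{u<s\}}\mathbf 1_{\{u<\tau\}}\,du$ and $\max(s,\tau)^{-1}=\int_{0}^{\infty}u^{-2}\mathbf 1_{\{u>s\}}\mathbf 1_{\{u>\tau\}}\,du$, each a superposition of rank-one positive-semidefinite kernels with nonnegative weight, hence positive semidefinite, and then apply the Schur product theorem to $\min(s,\tau)\cdot\max(s,\tau)^{-1}$. Yet another in-house route: $K(x,y)=y/x$ is admissible in the sense of Definition \ref{def1.1} (as noted in the Remark following that definition, being the $\epsilon\to0$ limit of $y/(x+\epsilon)$), so Theorem \ref{thm1} applies directly.

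Next, for $\dfrac{1-s}{1-\tau}$ with $s>\tau$: since $s>\tau\iff 1-s<1-\tau$, this equals $\min(1-s,1-\tau)/\max(1-s,1-\tau)=K_1(1-s,1-\tau)$, i.e.\ it is the pull-back of $K_1$ under the reflection $s\mapsto 1-s$ of $(0,1)$. That substitution merely relabels the evaluation points in any quadratic form, so positive-definiteness is preserved and this kernel is positive definite as well. (Equivalently, one may invoke Theorem \ref{thm1} directly, since $(1-x)/(1-y)=a(x)b(y)$ with $a(x)=1-x$, $b(y)=1/(1-y)$, $a'\le 0$, $b'\ge 0$, which is the product form flagged as admissible in the same Remark.)

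Finally, the consequence follows from the Schur product theorem: the Gram matrix of the pointwise product of two kernels is the Hadamard product of two positive-semidefinite matrices, hence positive semidefinite. Applying this to $\tau/s$ and $(1-s)/(1-\tau)$ gives that $\dfrac{\tau}{s}\cdot\dfrac{1-s}{1-\tau}$ is positive definite.

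There is no serious analytic obstacle here; the only thing that needs care is the bookkeeping around symmetrization — verifying that on the diagonal-removed domain the formula $\tau/s$ (with $s$ the larger coordinate) genuinely is the symmetric kernel $e^{-|\log s-\log\tau|}$, and that the substitutions $s\mapsto\log s$ and $s\mapsto 1-s$ are order-compatible — after which everything reduces to citing Bochner's theorem and the Schur product theorem. I would present the $e^{-|\cdot|}$ argument as the main line and keep the $\min/\max$ integral representation as a remark for readers who prefer to avoid Fourier analysis.
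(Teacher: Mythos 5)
Your main line of argument — writing the symmetrized kernel as $e^{-|\log s-\log\tau|}$, invoking the positive-definiteness of $e^{-|x-y|}$ via its nonnegative Fourier transform, handling $(1-s)/(1-\tau)$ by the reflection $s\mapsto 1-s$, and concluding with the Schur product theorem — is exactly the paper's proof of this lemma. The extra alternatives you sketch (the $\min/\max$ integral representations and the admissibility route via Theorem \ref{thm1}) are correct but not needed; the proposal is sound as stated.
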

\begin{proof}
Observe $\tau/s = e^{-|\log s- \log\tau|}$. The result then follows from
the fact that $e^{-|x-y|}$ is a positive definite kernel and a log change of variable.
\end{proof}

\subsection{A much shorter proof}
Rewrite the identity ($s>\tau$):
\begin{align*}
&(s-\tau)^{-\alpha}
\cdot(\frac s {1-s} )^{\alpha} 
= \frac 1 {(1-s)^{\alpha} (1-\tau)^{\alpha}}
\cdot (1- \frac {\frac{\tau}{1-\tau} }  {\frac s {1-s}} 
)^{-\alpha}  \notag \\
=&\; \frac 1 {(1-s)^{\alpha} (1-\tau)^{\alpha}} 
\cdot (1-e^{-|X(s)-X(\tau) |} )^{-\alpha},
\end{align*}
where $X(s)=\ln \frac  s {1-s} $. 

One can then use the following fact:
If $F$ has a positive power series expansion, then 
$F(e^{-|X(s)-X(\tau) |})$ is positive definite.
Indeed one can use the identity
\begin{align*}
e^{-|z|} = \frac 1 {2\pi} \int_{-\infty}^{\infty} \frac 1 {1+\xi^2}
e^{i \xi \cdot z} d\xi
\end{align*}
and obtain
\begin{align*}
e^{-|X(s)-X(t)|} = \frac 1 {2\pi} \int_{-\infty}^{\infty} \frac 1 {1+\xi^2}
e^{i \xi \cdot  X(s) }  e^{-i \xi \cdot X(t)} d\xi.
\end{align*}
Clearly then
\begin{align*}
\int\int 
e^{-|X(s)-X(t)|} u(s) u(t) ds dt &= \frac 1 {2\pi} \int_{-\infty}^{\infty} \frac 1 {1+\xi^2}
 \int u (s) e^{i \xi \cdot  X(s) } ds \int u(t) e^{-i \xi \cdot X(t)}dt d\xi \notag \\
 &= \frac 1 {2\pi} \int_{-\infty}^{\infty} \frac 1 {1+\xi^2}
|U(\xi)|^2 d\xi.
 \end{align*}
One should note that we did not use any regularity property of the map $X(s)$!
(But keep in mind that monotonicity was used when we made a change of variable
$(\frac {\tau} {1-\tau})/ (\frac s {1-s}) \to \exp( -|X(s)-X(t)|)$!)

\begin{rem*}
One may wonder whether $(1-\frac{ \phi(\tau) } {\phi(s) } )^{-\alpha}$ can be replaced by a more
general function.  This can indeed be done. For example consider 
$G(z)= F(e^{-|z|} )$ where $F$ is monotonically increasing on $(0,\infty)$ 
with $F^{\prime}>0$, $F^{\prime\prime}>0$ (basically think of $F(y)
\sim 1+y +y^2+\cdots$ which is in some sense ``completely positive"). Then we see that
when $0<z<\infty$,
\begin{align*}
G^{\prime} = F^{\prime}\cdot  (-e^{-z}) <0, \\
G^{\prime\prime}= F^{\prime\prime} e^{-2z} + F^{\prime} e^{-z}>0.
\end{align*}

By Polya's criterion
one can write
\begin{align*}
 \hat G(\xi) = 2 \int_0^{\infty} F(e^{-|z|})
 \cos 2\pi \xi z dz \ge 0.
\end{align*}
In yet other words $G(z)$ can be represented as 
\begin{align*}
G(z) = \int e^{2\pi i \xi \cdot z} \hat G(\xi) d\xi,
\end{align*}
where $\hat G(\xi) \ge 0$.

\end{rem*}

\begin{rem*}
One may wonder how to prove Polya's criterion directly. Here is one simple idea assuming
$f$ is nice and has sufficient decay.
What we have in mind is that $f$ looks like $e^{-|x|}$ on $(0,\infty)$. 
Consider $\xi>0$, 
\begin{align*}
\frac 12\hat f(\xi) &= \int_0^{\infty} f(x) \cos x \xi dx  \notag \\
&=f(x) \frac {\sin x\xi} {\xi} \Bigr|_{x=0}^{\infty}
- \frac 1 {\xi} \int_0^{\infty} f^{\prime}(x)  \sin \xi x dx \notag \\
&=  \int_0^{\infty} f^{\prime\prime}(x) \frac 
{1-\cos \xi x} {\xi^2} dx \ge 0.
\end{align*}
Actually one should see that the second integration by part is not necessary.
One can plot a picture of say $\sin x$ (with $\xi=1$ just for illustration) and note
that $-f^{\prime}(x)$ is increasing. From the picture, it is obvious that within each
period the integral must be positive!

\end{rem*}

\begin{rem*}
The function $(1-\cos k)/k^2$ looks very much like the Fejer kernel in usual Fourier
series. Indeed, we have
\begin{align*}
\int_{\mathbb R} (1-|t|)_{+} e^{- it k} dt = \frac 2 {k^2} (1-\cos k).
\end{align*}
Thus
\begin{align*}
\frac {1-\cos \xi r} {(\xi r)^2} = \frac 1 2
\int_{\mathbb R} (1-|t|)_{+} e^{-it r\xi} dt=
\frac 1 {2r} \int_{\mathbb R} (1- |\frac t r|)_{+} e^{-it \xi} dt.
\end{align*}
Then returning to the previous remark, we see that
\begin{align*}
\frac 12 \hat f(\xi) &= 
\int_0^{\infty} f^{\prime\prime}(r) \frac 
{1-\cos \xi r} {\xi^2 r^2}  r^2 dr\notag \\
&= \frac 12 
\int_0^{\infty} \int_{t\in \mathbb R}
f^{\prime\prime}(r) (1- |\frac t r|)_{+} e^{-i t \xi} r dr dt.
\end{align*}
In yet other words, we have
\begin{align*}
f(t) = \int_0^{\infty} f^{\prime\prime}(r)
(r-|t|)_+ dr = \int_0^{\infty} (1- \frac{|t|} r)_+  (r f^{\prime\prime}(r) ) dr.
\end{align*}
Denote $\nu$ a measure on $(0, \infty)$ as $\nu(dr) = r f^{\prime\prime}(r) dr$.
Note that $\int_0^{\infty} r f^{\prime\prime}(r) dr =1$ so that $\nu$ is a probability
measure on $(0,\infty)$.  Then we recover a version of Polya's criterion stated  in Durret's book 
(see \cite{Durret}, Page 137, Theorem 3.3.22) as follows:

\textbf{Theorem}: Let $\phi(t)$ be real nonnegative and have
$\phi(0)=1$, $\phi(t)=\phi(-t)$, and $\phi$ is decreasing and convex on $(0,\infty)$
with $\lim_{t \to 0+} \phi(t)=1$, $\lim_{t\to \infty} \phi(t)=0$. Then there
is a probability measure $\nu$ on $(0,\infty)$, so that
\begin{align*}
\phi(t) = \int_0^{\infty} ( 1 - |t/s|)_+ \nu (ds).
\end{align*}

\end{rem*}

\begin{rem*}
From the graph, we see that if $f$ is convex, then the representation
\begin{align*}
f(t) = \int_0^{\infty} (r-|t|)_+ f^{\prime\prime}(r) dr
\end{align*}
is expressing $f$ as a nonnegative combination of ``triangle functions"
$(r-|t|)_+$ which is intuitively clear! In some sense these triangle functions are the
simplest convex functions! Another way to see it is to regard the above as
some kind of distributional computation, namely
\begin{align*}
\partial_{rr} (  (r-t)_+ )=\partial_r( 1_{r>t}) = \delta(r-t).
\end{align*}

\end{rem*}

\subsection{Yet another short proof for the positivity of $\tau/s$}
\begin{prop}
$\int_{0<\tau<s<1} \frac{\tau} s u(\tau) u(s) d \tau ds \ge 0$ 
for any $u \in C_c^{\infty}((0,1))$.
\end{prop}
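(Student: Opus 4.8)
The plan is to give a self-contained, computational proof that sidesteps both Theorem \ref{thm1} and the Fourier-analytic route of Lemma \ref{lem1}. The key observation is that the double integral collapses to a one-dimensional integral once one introduces the correctly \emph{weighted} antiderivative of $u$.

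First I would set
\begin{align}
w(s) = \int_0^s \tau\, u(\tau)\, d\tau, \qquad 0\le s\le 1,
\end{align}
so that $w'(s) = s\, u(s)$ and, crucially, $w$ vanishes on a neighbourhood of $0$ because $u$ is compactly supported in $(0,1)$. Computing the inner integral gives $\int_0^s \frac{\tau}{s}\, u(\tau)\, d\tau = w(s)/s$, hence
\begin{align}
\int_{0<\tau<s<1} \frac{\tau}{s}\, u(\tau) u(s)\, d\tau\, ds
= \int_0^1 u(s)\, \frac{w(s)}{s}\, ds
= \int_0^1 \frac{w'(s)\, w(s)}{s^2}\, ds.
\end{align}

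Next I would integrate by parts, writing $w'w = \tfrac12 (w^2)'$ and differentiating the factor $s^{-2}$. The boundary term at $s=1$ is $\tfrac12 w(1)^2$, while the boundary term at $s=0$ vanishes since $w\equiv 0$ near $0$; the remaining integral picks up the factor $2s^{-3}$ with a favourable sign. This yields
\begin{align}
\int_{0<\tau<s<1} \frac{\tau}{s}\, u(\tau) u(s)\, d\tau\, ds
= \frac12 w(1)^2 + \int_0^1 \frac{w(s)^2}{s^3}\, ds \ge 0,
\end{align}
which is manifestly nonnegative, the integral on the right being finite because the integrand vanishes near $s=0$.

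There is essentially no serious obstacle: the only point needing a moment's care is the behaviour of the boundary term at $s=0$ and the integrability of $w(s)^2 s^{-3}$ there, both immediate from $u\in C_c^\infty((0,1))$. The one genuinely nonobvious ingredient is the choice of the $\tau$-weighted antiderivative $w(s)=\int_0^s \tau\, u(\tau)\,d\tau$ rather than the plain antiderivative $\int_0^s u$; this is exactly what makes the kernel $\tau/s$ telescope. As an aside, the same identity can be rederived from Lemma \ref{lem1} via the substitution $s=e^x$, under which $\tau/s = e^{-|x-y|}$ and the claim reduces to the positive-definiteness of $e^{-|x-y|}$, but the computation above has the merit of being entirely elementary.
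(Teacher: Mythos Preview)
Your proof is correct and is essentially identical to the paper's own argument: both introduce the weighted antiderivative $w(s)=\int_0^s \tau\,u(\tau)\,d\tau$, rewrite the double integral as $\int_0^1 s^{-2} w'(s) w(s)\,ds$, and integrate by parts. If anything your version is slightly more careful, since you retain the boundary term $\tfrac12 w(1)^2$ that the paper silently drops (it is nonnegative anyway, so the conclusion is unaffected).
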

\begin{proof}
\begin{align*}
\int_0^1 \frac 1 {s^2}\partial_s \Bigl(  \frac 12  ( \int_0^s \tau u(\tau) d\tau)^2 \Bigr) ds
=  \int_0^1 s^{-3} ( \int_0^s \tau u(\tau) d\tau)^2 ds.
\end{align*}
\end{proof}

Similarly if $\partial_s (  {\phi(s) A(s) } ) \ge 0$, then 
\begin{align*}
\int_{0<\tau<s<1}
\frac{\phi(\tau)} {A(s)} 
u(\tau) u(s) d\tau ds = \int_0^1(-\frac 12
\partial_s(\frac 1 {\phi(s) A(s) } )    (\int_0^s \phi(\tau) u(\tau) d\tau)^2 ds \ge 0.
\end{align*}

In particular if $A=\phi$ and $\phi\phi^{\prime}\ge 0$, then we have positivity!

\begin{rem*}
In yet other words, the condition is
\begin{align*}
\frac {\phi(\tau)} {A(s)} = \frac {\phi(\tau)\phi(s)} {A(s) \phi(s)}
= \frac {\phi(\tau) \phi(s)} { \text{monotone increasing}}.
\end{align*}
\end{rem*}

\begin{rem*}
In our case we can take $\phi(s)=s/(1-s)$ for $s\in (0,1)$ which clearly holds true.
In the Brownian bridge case we have (the kernel is
$\tau \wedge s -\tau s = (1-s)\tau$ when $\tau<s$) 
$\phi(\tau)=\tau$, $A(s)= \frac 1 {1-s}$ which also
holds true.
\end{rem*}
\subsection{Connection with Brownian bridge}
In this sub-section we explain the connection of the semi-positive definiteness of the kernel functions
to the classical theory of Brownian bridge, namely the classical fact that  $(1-s)t$ is the covariance of Brownian bridge. Recall a Brownian bridge on $[0,1]$ is
\begin{align*}
X_t = B_t -t B_1,
\end{align*}
where $B_t$ is the standard Brownian motion. One can then check that
(say $t<s$)
\begin{align*}
\mathbb E X_t X_s = \mathbb E (B_t-tB_1) (B_s-sB_1)=
t\wedge s -st-ts+ts = t \wedge s - ts  = (1-s)t,
\end{align*}
where we recall $\mathbb E B_t B_s= t \wedge s$. 

In yet other words, the kernel $(1-s)t$ is precisely the covariance kernel
of the Brownian bridge and thus must be positive definite!
Indeed, trivially one sees that
\begin{align*}
\int  (t\wedge s- st) u(t) u(s) ds dt = \int \mathbb E(X(t) u(t)  X(s) u(s) ) ds dt
= \mathbb E    ( \int u(t) X(t) dt )^2.
\end{align*}

\subsection{Connection with a Dirichlet BVP}
 Consider the Dirichlet problem on $[0,1]$:
\begin{align*}
-\partial_{tt}u =- u^{\prime\prime} =\delta(t-s),
\end{align*}
where $0<s<1$ is fixed and we employ Dirichlet boundary condition. 
 This is just the standard Green's function! Clearly $u$ must be a straight line
 from $0$ to $s$ and then back to $1$. So
 \begin{align*}
 u(t)= \begin{cases}
 k t, \quad 0<t<s; \\
 (k-1)t -(k-1), \quad s<t<1.
 \end{cases}
 \end{align*}
 By continuity one then get
 \begin{align*}
 ks = (k-1) s - (k-1).
 \end{align*}
 Thus $k=1-s$.  
 
 Now alternatively one can use Fourier series to solve the same problem. Namely
 \begin{align*}
 u= \sum_{n=1}^{\infty}  u_n \sin n\pi t,
 \end{align*}
 and
 \begin{align*}
 u_n (n\pi)^2 = 2 \sin n \pi s.
 \end{align*}
Thus 
\begin{align*}
u = \sum_{n=1}^{\infty} \frac 2 {(n\pi)^2} \sin n \pi t \sin n\pi s.
\end{align*}
In yet other words
\begin{align*}
(-\partial_{tt})_{\operatorname{Dirichlet} }^{-1}  \delta(t-s) =
t\wedge s -ts = \sum_{n=1}^{\infty}
\frac 2 {(n\pi)^2}
\sin n \pi s \sin n \pi t.
\end{align*}

\begin{rem}
 One can also obtain the Fourier expansion for the kernel $K(s,t)=
t\wedge s -ts $ directly by expanding it in terms of the basis
$\sin m \pi t \sin n\pi s$. Note that $K(1,t)=K(0,t)=0=K(s,0)=K(s,1)$!
\end{rem}

A natural question is whether we can prove directly the positive definiteness
of the kernel function $s\wedge t - ts$ on $[0,1]\times [0,1]$. Note that
\begin{align*}
s\wedge t - t s = ts (   (\frac 1s ) \wedge (\frac 1t ) -1).
\end{align*}
Thus it suffices to prove for $A, B \in [1,\infty)$, the kernel function
$A\wedge B-1$ is positive definite. This amounts to showing that
(say in the case $n=4$) for every $0< t_1 \le t_2 \le t_3\le t_4$, we have
\begin{align*}
\begin{pmatrix}
t_1 \quad t_1 \quad t_1 \quad t_1 \\
t_1  \quad t_2 \quad t_2 \quad t_2 \\
t_1 \quad t_2  \quad t_3 \quad t_3\\
t_1 \quad t_2 \quad t_3\quad t_4
\end{pmatrix}
\succ t_1 ,
\end{align*}
where the notation $\succ $ means positive definite and $t_1$ denotes the
constant matrix whose entries are all $t_1$. This can be easily done by induction,
since the statement for $n=4$ obviously reduces to showing
\begin{align*}
\begin{pmatrix}
t_2-t_1 \quad t_2-t_1 \quad t_2-t_1\\
t_2-t_1\quad t_3-t_1 \quad t_3-t_1\\
t_2-t_1 \quad t_3-t_1 \quad  t_4-t_1
\end{pmatrix}
\succ 0
\end{align*}
which clearly follows from the $n=3$ statement:

\begin{align*}
\begin{pmatrix}
t_2-t_1 \quad t_2-t_1 \quad t_2-t_1\\
t_2-t_1\quad t_3-t_1 \quad t_3-t_1\\
t_2-t_1 \quad t_3-t_1 \quad  t_4-t_1
\end{pmatrix}
\succ (t_2-t_1) \succ 0.
\end{align*}

 An integration-by-parts proof is also possible. Note that
for $0<s,t<1$:
\begin{align*}
&K(s,t)= s\wedge t - st = \frac 12 (s+1) - \frac 12{|s-t|} -st; \\
& \partial_s K(s,t)= \frac 12 - \frac 12 \operatorname{sgn}(s-t) -t,
\quad (\partial_s K)(s,0)= 0=(\partial_s K)(s,1);\\
& \partial_t \partial_s K(s,t) = \delta(t-s) -1.
\end{align*}
Then writing $v(s)= \int_0^s u(r)dr$, we obtain
\begin{align*}
\int K(s,t) u(s)u (t) ds dt &=- \int \partial_s K(s,t) v(s) u(t) ds dt \notag\\
& = \int  \partial_{st} K(s,t) v(s) v(t) ds dt \notag \\
& = \| v\|_{L^2([0,1])}^2 - \| v\|_{L^1([0,1])}^2.
\end{align*}
\begin{rem*}
Instead of the special kernel here, a more general kernel condition can also be worked out if we keep track of the boundary terms and so on.
\end{rem*}
\begin{rem*}
The proof here is short but perhaps is not the heart of the matter. One can use the approach
we developed earlier to obtain more refined  quantitative estimates. We shall not dwell on this
issue here.
\end{rem*}

One should note that the distribution computation above can be made rigorous by 
observing the following fact: say $k$ is continuous
and $k(0)=k(1)=0$,  $f\in C^{\infty}([0,1])$, then
\begin{align*}
&\int_0^1 k(s) \partial_s ( f \eta(\frac s {\epsilon}) ) ds
=\operatorname{NICE}+ \int_0^1 k(s) f \frac 1 {\epsilon} \eta^{\prime}(\frac s {\epsilon})
ds \to 0,\quad\text{as $\epsilon \to 0$};\\
&\int_0^1 k(s) \partial_s ( f \eta(\frac {s-1} {\epsilon}) ) ds
=\operatorname{NICE}+ \int_0^1 k(s) f \frac 1 {\epsilon} \eta^{\prime}(\frac {s-1}{\epsilon})
dx \to 0,\quad\text{as $\epsilon \to 0$},
\end{align*}
where $\eta$ is a cut-off bump function such that $\eta(x)=1$ for $|x|\le 1$
and $\eta(x) =0$ for $|x|>2$. Here we used the fact that $k(0)=k(1)=0$.

\begin{rem*}
Yet another short proof is as follows
\begin{align*}
\int_{0<\tau<s<1}  (1-s)s^{-1} ( \int_0^s \tau u(\tau) d\tau s u(s)) ds
= \int_0^1 \frac 1 2 ( s^{-2})  ( \int_0^s \tau u(\tau) d\tau)^2 ds.
\end{align*}
\end{rem*}

\frenchspacing
\bibliographystyle{plain}

\end{document}